\documentclass[11pt, oneside,a4]{amsart}
\usepackage{amsfonts, amstext, amsmath, amsthm, amscd, amssymb}
\usepackage{manfnt,hyperref}
\usepackage{mathrsfs,stmaryrd}
\usepackage{url}
\usepackage[all]{xypic}

\usepackage{pstricks}
\usepackage{pstricks-add}
\usepackage{multido}

\usepackage{marginnote}

\usepackage{enumerate}
\usepackage{graphics, pinlabel, color}
\usepackage{comment}

\usepackage[all,graph]{xy}
\usepackage{psfrag}

\usepackage{float, placeins,  longtable}

\renewcommand{\setminus}{{\smallsetminus}}
\textwidth 6.4in
\evensidemargin +0.01in
\oddsidemargin +0.01in
\newcommand{\tmfrac}[2]{\mbox{\large$\frac{#1}{#2}$}} 





\newcommand{\bp}{\begin{pmatrix}}
\newcommand{\ep}{\end{pmatrix}}
\newcommand{\be}{\begin{equation}}
\newcommand{\ee}{\end{equation}}
\newcommand{\ol}[1]{\overline{#1}}

\numberwithin{equation}{section}

\theoremstyle{plain}
\newtheorem{theorem}[equation]{Theorem}
\newtheorem{lemma}[equation]{Lemma}
\newtheorem{proposition}[equation]{Proposition}

\newtheorem{corollary}[equation]{Corollary}

\newtheorem*{claim*}{Claim}

\theoremstyle{definition}

\numberwithin{equation}{section}

 \newtheoremstyle{TheoremNum}
        {}{}              
        {\itshape}                      
        {}                              
        {\bfseries}                     
        {.}                             
        { }                             
        {\thmname{#1}\thmnote{ \bfseries #3}}
\theoremstyle{TheoremNum}

\def\Z{\mathbb Z}
\def\R{\mathbb R}
\def\Q{\mathbb Q}

\def\L{\Lambda}

\def\wt#1{\widetilde{#1}}

\def\sm{\setminus}

\def\toiso{\xrightarrow{\simeq}}

\def\ll{\langle}

\def\rr{\rangle}

\def\zt{\Z[t^{\pm 1}]}

\def\bp{\begin{pmatrix}}
\def\ep{\end{pmatrix}}
\def\ba{\begin{array}}
\def\ea{\end{array}}
\def\bn{\begin{enumerate}}
\def\en{\end{enumerate}}

\def\op{\operatorname}
\def\BS{\op{BS}}
\def\PD{\op{PD}}
\def\ev{\op{ev}}

\DeclareMathOperator\Hom{Hom}

\DeclareMathOperator\Id{Id}
\DeclareMathOperator\id{Id}

\DeclareMathOperator\Bl{Bl}
\DeclareMathOperator\im{im}

\DeclareMathOperator\coker{coker}

\def\wti{\widetilde}

\def\hom{\op{Hom}}
\def\ol{\overline}
\def\PD{\op{PD}}
\def\ev{\op{ev}}


\begin{document}

\title{A calculation of Blanchfield pairings of 3-manifolds and knots}

\author{Stefan Friedl}
\address{Fakult\"at f\"ur Mathematik\\ Universit\"at Regensburg\\   Germany}
\email{sfriedl@gmail.com}

\author{Mark Powell}
\address{
D\'epartement de Math\'ematiques, Universit\'e du Qu\'ebec \`a Montr\'eal, QC, Canada}
\email{mark@cirget.ca}


\def\subjclassname{\textup{2010} Mathematics Subject Classification}
\expandafter\let\csname subjclassname@1991\endcsname=\subjclassname
\expandafter\let\csname subjclassname@2000\endcsname=\subjclassname
\subjclass{%
 57M25, 
 57M27, 
}

\begin{abstract}
We calculate Blanchfield pairings of $3$-manifolds. In particular, we give a formula for the Blanchfield pairing of a fibred $3$-manifold and we give a new proof that the Blanchfield pairing of a knot can be expressed in terms of a Seifert matrix.
\end{abstract}
\maketitle

\section{Introduction}

Let $X$ be a 3-manifold. Throughout the paper we assume that all manifolds are compact, connected and oriented, and we assume that all 3-manifolds are either closed or that they have toroidal boundary.
Let $\phi \in H^1(X;\Z) =[X,S^1]$ be a nontrivial primitive cohomology class.  The map $\phi$ gives rise to a homomorphism $\pi_1(X) \to \Z=\ll t\rr$.  We write $\Lambda=\zt$ and we denote the infinite cyclic cover of $X$ corresponding to $\phi$ by $\wti{X}$. Thus $\phi$ determines an \emph{Alexander module} $H_1(\wti{X};\Z)=H_1(X;\L)$ of $X$.  We suppose that $H_1(X;\L)$ is $\L$-torsion; that is, $H_1(X;Q)=0$, where $Q=\Q(t)$ is the quotient field of~$\Lambda$.

In 1957, Blanchfield~\cite{Bl57} introduced a pairing on the Alexander module $H_1(X;\Lambda)$ that takes values in $Q/\Lambda$. More precisely, Blanchfield showed that there exists a pairing
\[ \Bl\colon H_1(X;\Lambda)\times H_1(X;\Lambda)\to Q/\Lambda,\]
which is sesquilinear i.e.\ linear over $\Lambda$ in the first variable and conjugate-linear over $\Lambda$ in the second variable.   We refer to this pairing as the \emph{Blanchfield pairing $\Bl$ of $(X,\phi)$}.


It follows from our assumption that $H_1(X;Q)=0$ and that $\phi$ is primitive, together with~\cite[Proposition~3.4]{FK06}, that there exists a connected, properly embedded surface $F$ dual to $\phi$.
Let $F \times [-1,1]$ be a thickening and let $Y:= X \sm (F \times (-1,1))$ be the complement of the thickened surface.  Let $\iota_{\pm} \colon H_1(F;\Z) \toiso H_1(F \times \{\pm 1\}) \to H_1(Y;\Z)$ be the two inclusion induced maps. Our assumption that $H_1(X;Q)=0$ implies, by a standard Mayer--Vietoris argument, that
$\iota_+-t^{-1}\iota_-\colon Q\otimes H_1(F;\Z) \to Q\otimes H_1(Q;\Z)$ is an isomorphism.
Let $\iota \colon H_1(F;\Z) \to H_1(\wti{X};\Z)=H_1(X;\Lambda)$ be the map that is induced by a lift of $F$ to the infinite cyclic cover $\wti{X}$.
Our main technical theorem obtains the following expression for the Blanchfield pairing of any two elements of $H_1(X;\L)$ lying in the image of~$\iota$.

\begin{theorem}\label{thm:computebl-intro}
Let $X$ be a  $3$-manifold  together with a primitive class $\phi \in H^1(X;\Z)$, and suppose that $H_1(X;Q)=0$.
For any connected surface $F$ dual to $\phi$ and for any $v,w\in \L \otimes  H_1(F;\Z)$ we have
\[ \op{Bl}(\iota v,\iota w)=-(\iota_+-t^{-1}\iota_-)^{-1}(\iota_+(v))\underset{F}{\cdot} w.\]
Here $\iota_+(v)$ lies in $\Lambda\otimes H_1(Y;\Z)\subset Q\otimes H_1(Y;\Z)$,  the map
$\iota_+-t^{-1}\iota_-\colon Q\otimes H_1(F;\Z) \to Q\otimes H_1(Y;\Z)$ is the aforementioned isomorphism and $\underset{F}\cdot$ denotes the sesquilinear intersection pairing $Q\otimes H_1(F;\Z)\times Q\otimes H_1(F;\Z)\to Q$.
\end{theorem}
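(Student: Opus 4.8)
The plan is to compute the Blanchfield pairing via its standard description as a boundary-linking-form-style pairing coming from Poincaré--Lefschetz duality. Recall that $\Bl$ is defined as the composite
\[
H_1(X;\L) \xrightarrow{\PD} H^2(X;\L) \xrightarrow{\beta^{-1}} H^1(X;Q/\L) \xrightarrow{\op{ev}} \overline{\Hom_\L(H_1(X;\L),Q/\L)},
\]
where $\beta$ is the Bockstein associated to $0 \to \L \to Q \to Q/\L \to 0$, which is an isomorphism because $H_1(X;Q) = H_2(X;Q) = 0$ (the latter from $\L$-torsion and the universal coefficient spectral sequence, or from the assumption that $H_1(X;Q)=0$ combined with duality). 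So to evaluate $\Bl(\iota v, \iota w)$ I must chase $\iota w$ through this chain and pair the result against $\iota v$.

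First I would set up chain-level representatives adapted to the decomposition $X = Y \cup_{F \times \{-1,1\}} (F \times [-1,1])$, working in the infinite cyclic cover $\wti X$, which is built from $\Z$-many copies of $\wti Y$ (a chosen lift of $Y$) glued along lifts of $F$. A class in the image of $\iota$ is represented by a $1$-cycle $z$ supported in the chosen lift $F \subset \wti X$. The key geometric input is the Mayer--Vietoris isomorphism $\iota_+ - t^{-1}\iota_- \colon Q \otimes H_1(F) \to Q \otimes H_1(Y)$: given $w \in \L \otimes H_1(F)$, the element $(\iota_+ - t^{-1}\iota_-)^{-1}(\iota_+(w)) =: u \in Q \otimes H_1(F)$ records how the $1$-cycle $\iota_+ w$, pushed into $Y$, bounds once we are allowed to add lifts of $F$ with $Q$-coefficients; concretely, $\iota_+ w - \iota_-(u)$ (interpreted in $\wti X$ with the appropriate $t$-shift) bounds a $2$-chain $C$ in (a $Q$-coefficient version of) a single copy of $\wti Y$. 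This $2$-chain $C$, together with the product region $F \times [-1,1]$, is what realizes the Bockstein: $\p C$ lies in the surface lifts, so $C$ represents the $Q/\L$-reduction needed, and evaluating the resulting $H^1(X;Q/\L)$ class on $\iota v$ amounts to intersecting $v$ (a cycle in $F$) with $C$ inside $Y$, i.e.\ with the $Q$-chain $u$ inside $F$ after using that intersections in $Y$ of classes pushed off $F$ reduce to intersections in $F$. This produces the term $u \cdot_F v$, up to sign and conjugation, and untangling the sesquilinearity conventions gives the stated formula $\Bl(\iota v,\iota w) = -(\iota_+ - t^{-1}\iota_-)^{-1}(\iota_+ v) \cdot_F w$ (note the roles of $v,w$ and the single $t^{-1}$ must be tracked carefully).

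The main obstacle, and where I would spend the most care, is precisely this bookkeeping: (i) fixing sign and orientation conventions for $\PD$, for the intersection pairing $\cdot_F$, and for the identification $H_1(\wti X) = H_1(X;\L)$, so that the final sign is genuinely $-1$ and not $+1$; (ii) verifying that the relevant homology of $Y$ with $Q$-coefficients vanishes in the right degrees so that the bounding $2$-chain $C$ exists and is unique up to the ambiguity that is killed in $Q/\L$ — this is where the hypothesis $H_1(X;Q)=0$ (equivalently the $\L$-torsion assumption) and the Mayer--Vietoris sequence do the real work; and (iii) justifying the reduction of the intersection number in the $3$-manifold $Y$ between the $2$-chain $C$ and the $1$-cycle representing $v$ to the intersection pairing $\cdot_F$ on the surface $F$, which uses that both $v$ and the boundary pieces of $C$ can be pushed to lie in parallel copies $F \times \{\pm 1\}$ and that the product structure of $F \times [-1,1]$ makes the linking computation collapse onto $F$. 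Once these three points are pinned down, the formula follows by direct diagram chase; I would present it by first proving the analogous statement at chain level over $\L$ (without inverting), then tensoring with $Q$ and reading off the pairing.
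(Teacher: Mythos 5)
Your strategy coincides with the paper's: both evaluate the defining composite (inverse Poincar\'e duality, inverse Bockstein, evaluation) at the chain level, using the decomposition $X=Y\cup F\times[-1,1]$, the Mayer--Vietoris isomorphism $\iota_+-t^{-1}\iota_-$ over $Q$ to produce the bounding $2$-chain, and the product structure on $F\times[-1,1]$ to collapse intersections in $X$ to the pairing $\underset{F}{\cdot}$. The paper organises exactly this into two lemmas (Lemmas~\ref{claim1} and~\ref{claim2}) and a final diagram chase, with explicit chain-level splittings in place of your geometric language.

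There is, however, one step you have filed under ``untangling the sesquilinearity conventions'' that is not a convention issue. You bound the \emph{second} argument: your $2$-chain $C$ has boundary governed by $u=(\iota_+-t^{-1}\iota_-)^{-1}(\iota_+(w))$, and intersecting with $v$ yields, up to sign and conjugation, $u\underset{F}{\cdot}v$, i.e.\ $T(w)\underset{F}{\cdot}v$ with $T=(\iota_+-t^{-1}\iota_-)^{-1}\iota_+$. The asserted formula is $-T(v)\underset{F}{\cdot}w$. Passing from one to the other is precisely the identity $\ol{T(w)\underset{F}{\cdot}v}\equiv T(v)\underset{F}{\cdot}w \pmod{\L}$, which is hermitian-ness of the resulting form --- a fact the paper deliberately does not assume (it deduces hermitian-ness of $\Bl$ only afterwards, in Corollary~\ref{cor:hermitian-nonsingular}, and remarks that it is notoriously tricky to prove directly). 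Note also that the congruence holds only modulo $\L$, not in $Q$, so it cannot be absorbed into orientation or duality sign conventions. The paper avoids the problem by distributing the roles the other way: $\PD(\iota w)$ is represented by the cochain ``project onto the product cells of $F\times I$ and pair with $w$ via $\underset{F}{\cdot}$'', while the Bockstein-inverse-plus-evaluation step bounds $\iota_+ v$ by a $Q$-coefficient $2$-chain (the splitting $d$ of $\partial$); the chase then lands directly on $-T(v)\underset{F}{\cdot}w$. You should either swap the roles of $v$ and $w$ in your construction to match this, or prove the displayed mod-$\L$ adjointness of $T$ as a separate lemma; as written, the sketch silently assumes it.
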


\subsection{Fibred 3-manifolds}

For our first application of Theorem~\ref{thm:computebl-intro} we compute the Blanchfield pairing of a fibred 3-manifold.
First we fix some notation. Let $F$ be a surface and let $\varphi\colon F\to F$ be a self-homeomorphism.
We denote the corresponding mapping torus by $M=M(F,\varphi)=F\times [0,1]/(x,0)\sim (\varphi(x),1)$, which we endow with the usual product orientation. We refer to $\phi\colon \pi_1(M)\to \pi_1([0,1]/0\sim 1)=\Z$ as the canonical epimorphism.  We remark that $H_1(X;\L)$ is $\L$-torsion, since the infinite cyclic cover $\wt{X}$ of $X$ is homeomorphic to $F \times \R$, and hence the homology $H_1(\wt{X};\Z) =H_1(X;\L)$ is finitely generated over~$\Z$.

\begin{corollary}\label{cor:fib}
Let $F$ be a  surface and let $\varphi\colon F\to F$ be a self-homeomorphism.
Denote the mapping torus by $M=M(F,\varphi)$ and denote the canonical epimorphism by $\phi \colon \pi_1(M) \to \Z$.  Pick a basis $c_1,\dots,c_k$ for $H_1(F;\Z)$. With respect to this basis, let $J$ be the matrix representing the intersection pairing on $H_1(F;\Z)$, and let $P$ be the matrix representing the monodromy $\varphi_*\colon H_1(F;\Z)\to H_1(F;\Z)$. Then the Blanchfield pairing of $(M,\phi)$ is isomorphic to
\[ \ba{rcl} \L^k/(tP-\id)\times \L^k/(tP-\id)&\to &Q/\Lambda\\
(v,w)&\mapsto & v^T J(t^{-1}P-\id)^{-1}\ol{w}.\ea\]
\end{corollary}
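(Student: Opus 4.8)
The plan is to specialize Theorem~\ref{thm:computebl-intro} to the fibred case, where the dual surface $F$ is the fibre and the complement $Y$ is a product. First I would observe that cutting the mapping torus $M=M(F,\varphi)$ along the fibre $F$ (pushed off to $F\times\{1/2\}$, say) yields $Y=F\times[-1,1]$, so that both inclusion-induced maps $\iota_\pm\colon H_1(F;\Z)\to H_1(Y;\Z)$ are isomorphisms. Concretely, with respect to the chosen basis $c_1,\dots,c_k$, one of them, say $\iota_+$, is the identity matrix, while $\iota_-$ is the monodromy matrix $P$ (the gluing on the $F\times\{0\}$ side is by $\varphi$); one must be careful about the orientation conventions and which side gets the monodromy, but this is exactly the standard identification of the Alexander module of a fibred manifold. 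Consequently the isomorphism $\iota_+-t^{-1}\iota_-\colon Q\otimes H_1(F;\Z)\to Q\otimes H_1(Y;\Z)$ is represented by the matrix $\id - t^{-1}P$, and the map $\iota\colon H_1(F;\Z)\to H_1(M;\L)$ identifies the Alexander module with $\L^k/(tP-\id)$ (equivalently $\L^k/(t^{-1}P - \id)$ after clearing $t$; I would present whichever presentation matches the statement, noting $tP-\id$ and $\id - t^{-1}P$ differ by the unit $t$ and the invertible $P$).

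Next I would feed these matrices into the formula of Theorem~\ref{thm:computebl-intro}. For $v,w\in\L^k \cong \L\otimes H_1(F;\Z)$ we have $\iota_+(v)=v$, and $(\iota_+-t^{-1}\iota_-)^{-1}(\iota_+(v)) = (\id - t^{-1}P)^{-1}v$, viewed back in $Q\otimes H_1(F;\Z)$. The intersection pairing on $H_1(F;\Z)$ is represented by $J$, so for column vectors $a,b$ the sesquilinear pairing is $a\cdot_F b = a^T J \ol{b}$ (with the convention that the bar denotes the involution $t\mapsto t^{-1}$ applied entrywise). Therefore
\[
\op{Bl}(\iota v,\iota w) = -\bigl((\id - t^{-1}P)^{-1}v\bigr)^T J\,\ol{w} = -v^T(\id - t^{-1}P^T)^{-1}J\,\ol{w}.
\]
To match the stated answer $v^T J(t^{-1}P-\id)^{-1}\ol{w}$ I would use the fact that $P$ is an isometry of the intersection form, i.e.\ $P^T J P = J$, equivalently $J P = (P^T)^{-1}J = (P^{-1})^T J$, hence $(\id - t^{-1}P^T)^{-1}J = J(\id - t^{-1}P^{-1})^{-1}\cdot(\text{rearrangement})$; chasing this through, $-(\id - t^{-1}P^T)^{-1}J = J(t^{-1}P-\id)^{-1}$ up to the bookkeeping, which gives exactly the claimed formula. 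Finally I would double-check sesquilinearity and that the formula lands in $Q/\L$, and verify that changing the lift of $F$ used to define $\iota$ only changes the identification by an overall power of $t$, which does not affect the pairing.

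The main obstacle I expect is the bookkeeping of orientation and side conventions: pinning down precisely which inclusion is the identity and which is $P$ (versus $P^{-1}$), whether the relevant presentation matrix is $tP-\id$ or $t^{-1}P - \id$ or $t P^{-1}-\id$, and getting the sign and the transpose to land correctly so that the isometry identity $P^T J P = J$ converts $-v^T(\id-t^{-1}P^T)^{-1}J\,\ol w$ into $v^T J(t^{-1}P-\id)^{-1}\ol w$. These are all routine once the conventions from the mapping-torus construction (product orientation, direction of the monodromy gluing, lift of the fibre) are fixed, but they require care; the conceptual content is entirely contained in Theorem~\ref{thm:computebl-intro} together with the observation that the cut-open fibred manifold is a product.
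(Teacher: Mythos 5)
Your overall strategy is the same as the paper's: cut $M$ along the fibre so that $Y$ is a product, read off $\iota_\pm$ as the identity and the monodromy matrix in some order, feed this into Theorem~\ref{thm:computebl-intro}, and use $P^TJP=J$ to move $J$ past the inverse. The paper does exactly this, taking $Y=F\times[\frac{1}{4},\frac{3}{4}]$ with the basis $d_i=\varphi^{-1}(c_i)\times\frac{1}{2}$ of $H_1(Y;\Z)$, for which $\iota_+=P$ and $\iota_-=\id$.

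However, your specific assignment $(\iota_+,\iota_-)=(\id,P)$ is the one choice that does \emph{not} reproduce the stated formula, and the algebraic identity you invoke at the end is false. Concretely: from $P^TJP=J$ one gets $P^TJ=JP^{-1}$, hence $(\id-t^{-1}P^T)J=J(\id-t^{-1}P^{-1})$ and therefore $-(\id-t^{-1}P^T)^{-1}J=J(t^{-1}P^{-1}-\id)^{-1}$, not $J(t^{-1}P-\id)^{-1}$; your claimed identity is equivalent to $JP=P^TJ$, i.e.\ to $P^2=\id$. Likewise $\id-t^{-1}P$ is not $tP-\id$ up to units and an invertible integral matrix in the way you suggest: multiplying by $t$ and $P^{-1}$ gives $tP^{-1}-\id$, so your presentation of the Alexander module is $\L^k/(tP^{-1}-\id)$. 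Carried out consistently, your conventions therefore prove the corollary with $P$ replaced by $P^{-1}$ throughout (i.e.\ the statement for the monodromy $\varphi^{-1}$, equivalently for $-\phi$). This is exactly the kind of convention-sensitive discrepancy the paper is at pains to avoid (compare the discussion of Kearton's formula in the introduction). The fix is to pin down the $\pm$ sides using the convention of Section~\ref{section:31} that $F_0\times 1$ lies in $Y_0$ and $F_0\times(-1)$ lies in $t^{-1}Y_0$: with the canonical epimorphism $\phi$ this forces either $(\iota_+,\iota_-)=(P,\id)$ in the basis $\varphi^{-1}(c_i)\times\frac{1}{2}$ (the paper's choice), or $(\id,P^{-1})$ in the basis $c_i\times\frac{1}{2}$; with either of these the computation you outline does land on $\L^k/(tP-\id)$ and $v^TJ(t^{-1}P-\id)^{-1}\ol{w}$.
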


Here it is an entertaining exercise to verify that the formula in the corollary makes sense. In the verification one needs that $\varphi$ preserves the intersection pairing on $F$ i.e.\ that $P^T JP=J$.

\subsection{The Blanchfield pairing of a knot}

Another important special case arises from knot theory.
Given an oriented knot $K\subset S^3$ we write $X_K:=S^3\sm \nu K$, where $\nu K$ denotes an open tubular neighbourhood around $K$.  A generator $\phi \in H^1(X_K;\Z)$ gives rise to the Alexander module $H_1(X_K;\L)$, and this is $\L$-torsion for any~$K$.  Let $F$ be a genus $g$ Seifert surface for $K$ and let $A$ be a $2g\times 2g$ Seifert matrix for $K$ with respect to a basis for $H_1(F;\Z)$.  The matrices $A$ and $A^T$ represent $\iota_+$ and $\iota_-$ respectively (see the statement of Theorem~\ref{thm:blanchfield-k-intro} below for more details).
It is well known, and straightforward to show, that the Alexander module $H_1(X_K;\Lambda)$ is isomorphic to $\L^{2g}/(tA-A^T)$.
It is also well known, but less straightforward to show, that the Blanchfield pairing $\Bl_K$ can be expressed in terms of a Seifert matrix.
After proving Theorem~\ref{thm:computebl-intro}, we will deduce the following formula for the Blanchfield pairing of a knot.

\begin{theorem}\label{mainthm}
Let  $K\subset S^3$ be an oriented knot.
Let $A$ be a Seifert matrix for $K$ of size $2g$.
The Blanchfield pairing of $K$ is isometric to the pairing
\[ \ba{rcl} \Lambda^{2g}/(tA-A^T) \times \Lambda^{2g}/(tA-A^T)&\to & Q/\Lambda\\
(v,w)&\mapsto& {v}^T(t-1)(A - tA^T)^{-1}\ol{w}.\ea\]
\end{theorem}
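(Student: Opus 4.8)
The plan is to deduce Theorem~\ref{mainthm} from Theorem~\ref{thm:computebl-intro} by specialising to the case $X=X_K$ with $F$ a genus $g$ Seifert surface and $\phi$ the class dual to $F$. First I would set up the standard Seifert-surface picture: cut $X_K$ along $F$ to obtain $Y$, which (since $S^3$ cut along $F$ is a handlebody) has $H_1(Y;\Z)\cong\Z^{2g}$, with the two push-off maps $\iota_\pm\colon H_1(F;\Z)\to H_1(Y;\Z)$ represented, in a suitable pair of bases, by the Seifert matrix $A$ and its transpose $A^T$ respectively. Concretely, if $c_1,\dots,c_{2g}$ is a basis for $H_1(F;\Z)$ with Seifert matrix $A$ (so $A_{ij}=\operatorname{lk}(c_i^+,c_j)$) and intersection matrix $J=A-A^T$ on $F$, then picking the dual basis $d_1,\dots,d_{2g}$ of $H_1(Y;\Z)$ one has $\iota_+(c_i)=\sum_j A_{ji} d_j$ and $\iota_-(c_i)=\sum_j A^T_{ji}d_j$; I would spell this out carefully since getting the transposes and the direction of the push-offs right is exactly where sign and transpose errors creep in. This immediately gives $\iota_+ - t^{-1}\iota_- = A^T - t^{-1}A$ as a map $\Lambda^{2g}\to\Lambda^{2g}$ (up to multiplying bases), hence after clearing denominators the Alexander module presentation $H_1(X_K;\Lambda)\cong \Lambda^{2g}/(tA-A^T)$, and it shows that the isomorphism $\iota$ of Theorem~\ref{thm:computebl-intro} is surjective, so the formula there computes the pairing on all of the Alexander module.

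Next I would substitute into the formula of Theorem~\ref{thm:computebl-intro}. Writing the intersection pairing $\underset{F}{\cdot}$ on $H_1(F;\Z)$ as the matrix $J=A-A^T$ and the isomorphism $\iota_+-t^{-1}\iota_-$ as $A^T-t^{-1}A$, we get for $v,w\in\Lambda^{2g}$:
\[
\operatorname{Bl}(\iota v,\iota w) = -\big((A^T-t^{-1}A)^{-1}A^T v\big)^T (A-A^T)\,\ol w.
\]
Now I would do the linear algebra to bring this into the stated shape. Transposing, $-\big((A^T-t^{-1}A)^{-1}A^Tv\big)^T(A-A^T) = -v^T A (A-t^{-1}A^T)^{-1}(A-A^T)$ after taking transposes of the matrices (using $(M^T)^{-1}=(M^{-1})^T$ and $(A^T-t^{-1}A)^T = A-t^{-1}A^T$). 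So $\operatorname{Bl}(\iota v,\iota w) = -v^T A(A-t^{-1}A^T)^{-1}(A-A^T)\ol w$. To match the target formula $v^T(t-1)(A-tA^T)^{-1}\ol w$, I would multiply numerator and denominator inside the middle factor by $t$, i.e. $(A-t^{-1}A^T)^{-1} = t(tA-A^T)^{-1}$, giving $-t\,v^T A(tA-A^T)^{-1}(A-A^T)\ol w$; then I would use the presentation relation $tA\ol w \equiv A^T\ol w$ modulo $(tA-A^T)$ — but here some care is needed because the manipulation must be valid as an identity of elements of $Q/\Lambda$, so I would instead manipulate at the level of the matrix identity, writing $A-A^T = (tA - A^T) - (t-1)A$ to split off the term $(tA-A^T)$ which, being killed in the quotient module, contributes $0$ in $Q/\Lambda$. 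That leaves $-t\,v^T A(tA-A^T)^{-1}\big(-(t-1)A\big)\ol w$, and after rearranging and a further application of the same trick (to move the remaining factor of $A$ past $(tA-A^T)^{-1}$, replacing $tA$ by $A^T$ modulo the relation) I would arrive at $v^T(t-1)(A-tA^T)^{-1}\ol w$, up to a unit in $\Lambda$ which can be absorbed into the choice of generator $t$ — this last bookkeeping is what I would check most carefully.

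The main obstacle I anticipate is not the topology — the Seifert surface cut-and-paste is classical and Theorem~\ref{thm:computebl-intro} does all the real work — but rather the purely algebraic normalisation: verifying that the several successive substitutions of the presentation relation $tA\equiv A^T$ are legitimate as operations on $Q/\Lambda$ (one may only add elements of $(tA-A^T)\Lambda^{2g}$, i.e. only on the $\ol w$ side, and one must track that the coefficient matrices stay invertible over $Q$), and matching the overall sign and the symmetry of the conjugate-linearity conventions so that the final pairing is genuinely isometric — not merely isomorphic — to the Blanchfield pairing. I would also include a brief remark confirming that $(A-tA^T)$ is invertible over $Q$, which follows since $\det(A-tA^T) = \pm t^{g}\Delta_K(t)\ne 0$ in $Q$, and that Hermitian symmetry of the resulting formula reproduces the known symmetry $\operatorname{Bl}_K(x,y)=\overline{\operatorname{Bl}_K(y,x)}$, as a sanity check.
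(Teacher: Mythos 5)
Your high-level strategy is the same as the paper's: specialise Theorem~\ref{thm:computebl-intro} to a Seifert surface, identify $\iota_\pm$ with $A$ and $A^T$, and then manipulate in $Q/\Lambda$ using the relation. However, the execution has two genuine gaps. First, the surjectivity of $\iota\colon \Lambda\otimes H_1(F;\Z)\to H_1(X_K;\Lambda)$ does \emph{not} ``immediately'' follow from the Mayer--Vietoris presentation: that presentation exhibits $H_1(X_K;\Lambda)$ as a quotient of $\Lambda\otimes H_1(Y;\Z)$, whereas $\iota$ factors through the submodule $\iota_+(\Lambda\otimes H_1(F;\Z))$, which is in general proper. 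One really needs the argument of Lemma~\ref{lemma:iota-+-surjective}, which uses $\det(A-A^T)=\pm1$ to show that $\iota_+$ followed by projection to $\coker(\iota_+-t^{-1}\iota_-)$ is onto; without it you cannot conclude that Theorem~\ref{thm:computebl-intro} determines the whole pairing.

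Second, and more seriously, you conflate the two coordinate systems. The generators of $\Lambda^{2g}/(tA-A^T)$ correspond to the Alexander-dual basis $e_1,\dots,e_{2g}$ of $H_1(Y;\Z)$, and the isometry is $\Phi\colon \wt{v}\mapsto\sum\wt{v}_ie_i$; since $\iota(v)=\Phi(Av)$, the output of Theorem~\ref{thm:computebl-intro} must be rewritten in the $Y$-coordinates $\wt{v}=Av$, $\wt{w}=Aw$, and what one actually proves is $\Bl(\iota v,\iota w)=(Av)^T(t-1)(A-tA^T)^{-1}\ol{Aw}$. Your final identity $\Bl(\iota v,\iota w)=v^T(t-1)(A-tA^T)^{-1}\ol{w}$ in the $F$-coordinates is false: with the paper's conventions and the trefoil matrix $A=\bigl(\begin{smallmatrix}-1&1\\0&-1\end{smallmatrix}\bigr)$ one computes $\Bl(\iota d_1,\iota d_2)\equiv 1/\Delta_K(t)$ while $d_1^T(t-1)(A-tA^T)^{-1}\ol{d_2}=(1-t)/\Delta_K(t)$, and these differ by $t/\Delta_K(t)\neq 0$ in $Q/\Lambda$. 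Moreover $v\mapsto\iota(v)$ does not even descend to $\Lambda^{2g}/(tA-A^T)$ (this would require $(tA-A^T)^{-1}A(tA-A^T)$ to have entries in $\Lambda$, which fails for the trefoil), so it cannot serve as the isometry. This is also where your algebra breaks down: substituting $tA\equiv A^T$ is only legitimate against the $\ol{w}$ slot, your repeated application of the trick keeps regenerating factors of $A$ on the left and does not terminate in the stated formula, and the leftover is not ``a unit that can be absorbed''. The correct bookkeeping is the paper's: prove the formula in $Y$-coordinates for $\wt{v},\wt{w}\in\im A$ using the single splitting $A-A^T=(-A^T+t^{-1}A)+(1-t^{-1})A$, then use Lemma~\ref{lemma:iota-+-surjective} together with an explicit check that the formula is unchanged upon adding elements of $(tA-A^T)\Lambda^{2g}$ to extend to arbitrary $\wt{v},\wt{w}$.
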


In this instance we also encourage the reader to verify that the pairing in Theorem~\ref{mainthm} is indeed well-defined and sesquilinear.
In particular it is worth noting that the discrepancy between $tA-A^T$ in the description of the module and $A-tA^T$  in the definition of the pairing is deliberate, and is in fact necessary in order for the pairing to be well-defined.

Before we discuss the history of Theorem~\ref{mainthm} we state a more precise, albeit less readable version.

\begin{theorem}\label{thm:blanchfield-k-intro}
Let  $K\subset S^3$ be an oriented knot.
Let $F$ be a Seifert surface for the knot $K$ and let  $\{d_1,\dots,d_{2g}\}$ be a basis for $H_1(F;\Z)$.
Denote the Seifert matrix of $F$ corresponding to the basis $\{d_1,\dots,d_{2g}\}$  by $A$, and
 write  $Y:= X_K \sm (F \times(-1,1))$.  Let $\{e_1,\dots,e_{2g}\}$ be the basis of $H_1(Y;\Z)$ that is  Alexander dual to the basis $\{d_1,\dots,d_{2g}\}$, that is $\op{lk}(d_i,e_j)=\delta_{ij}$.
Then the homomorphism
\[ \ba{rcl} \Phi\colon \Lambda^{2g}&\to& H_1(X_K;\Lambda)\\
  (p_1,\dots,p_{2g})&\mapsto & \sum\limits_{i=1}^{2g}p_ie_i \ea\]
induces an isomorphism
\[  \Phi\colon \Lambda^{2g}/(tA-A^T)\Lambda^{2g}\xrightarrow{\cong} H_1(X_K;\Lambda),\]
which in turn gives rise to a commutative diagram
\[ \xymatrix@C4cm{\Lambda^{2g}/(tA-A^T)\Lambda^{2g}\times \Lambda^{2g}/(tA-A^T)\Lambda^{2g}\ar[r]^-{(v,w)\mapsto v^T(t-1)(A - tA^T)^{-1}\ol{w}}\ar[d]^{\Phi\times \Phi}&Q/\Lambda\ar[d]^=\\
H_1(X_K;\Lambda)\times H_1(X_K;\Lambda)\ar[r]^-{(v,w)\mapsto \Bl_K(v,w)}&Q/\Lambda.}\]
\end{theorem}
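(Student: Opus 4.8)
The plan is to deduce Theorem~\ref{thm:blanchfield-k-intro} from Theorem~\ref{thm:computebl-intro} applied to a Seifert surface $F$; the only genuine subtlety is that Theorem~\ref{thm:computebl-intro} computes $\Bl_K$ only on the image of $\iota$, so I will first have to check that this image is all of $H_1(X_K;\Lambda)$.

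First I would recall the classical dictionary between the Seifert form and linking numbers in the complement of $F$: with respect to the basis $\{d_i\}$ of $H_1(F;\Z)$ and the Alexander--dual basis $\{e_j\}$ of $H_1(Y;\Z)$ (the $e_j$ with $\op{lk}(d_i,e_j)=\delta_{ij}$), the inclusion--induced maps $\iota_+$ and $\iota_-$ are represented by $A$ and $A^T$, while the intersection pairing $\underset{F}{\cdot}$ on $H_1(F;\Z)$ is represented by $A-A^T$ (up to the sign fixed by one's orientation conventions). Feeding $\iota_+=A$, $\iota_-=A^T$ into the Mayer--Vietoris sequence
\[\Lambda\otimes H_1(F;\Z)\xrightarrow{\ \iota_+-t^{-1}\iota_-\ }\Lambda\otimes H_1(Y;\Z)\xrightarrow{\ j\ } H_1(X_K;\Lambda)\longrightarrow 0,\]
which is exact on the right by the usual argument (the map $H_1(X_K;\Lambda)\to\Lambda\otimes H_0(F)=\Lambda$ vanishes since its source is $\Lambda$--torsion), and then multiplying the presentation matrix $A-t^{-1}A^T$ by the unit $t$, produces the isomorphism $\Phi\colon\Lambda^{2g}/(tA-A^T)\Lambda^{2g}\xrightarrow{\cong}H_1(X_K;\Lambda)$; concretely $\Phi$ is just $j$ read off on the generators $e_j$, and, with the lift convention compatible with Theorem~\ref{thm:computebl-intro}, one has $\iota=j\circ\iota_+$, hence $\iota(v)=\Phi(Av)$ for all $v\in\Lambda^{2g}$. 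This is the first assertion of the theorem.

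Next I would show that $\iota\colon\Lambda\otimes H_1(F;\Z)\to H_1(X_K;\Lambda)$ is surjective. By the previous paragraph this amounts to $A\Lambda^{2g}+(tA-A^T)\Lambda^{2g}=\Lambda^{2g}$, which, since all the matrices are integral, reduces to $A\Z^{2g}+A^T\Z^{2g}=\Z^{2g}$; and this holds because a vector in the common left kernel of $A$ and $A^T$ over a prime field $\F_p$ lies in the left kernel of $A-A^T$, which is trivial as $\det(A-A^T)=\pm1$. So Theorem~\ref{thm:computebl-intro} determines $\Bl_K$ on all of $H_1(X_K;\Lambda)$. To finish, given $u,w\in\Lambda^{2g}$ I would choose $v,w'\in\Lambda^{2g}$ with $Av\equiv u$ and $Aw'\equiv w$ modulo $(tA-A^T)\Lambda^{2g}$ (possible by the surjectivity just proved together with $\Phi$ being an isomorphism), so that $\Phi(u)=\iota(v)$ and $\Phi(w)=\iota(w')$. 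Using $(A-t^{-1}A^T)^{-1}=t(tA-A^T)^{-1}$, $(tA-A^T)^{T}=-(A-tA^T)$ and $\overline A=A$, Theorem~\ref{thm:computebl-intro} rewrites $\Bl_K(\Phi(u),\Phi(w))$ as $t\,v^{T}A^{T}(A-tA^T)^{-1}(A-A^T)\,\overline{w'}$, whereas the candidate pairing at $(Av,Aw')$ is $v^{T}A^{T}(t-1)(A-tA^T)^{-1}A\,\overline{w'}$; their difference is
\[v^{T}A^{T}(A-tA^T)^{-1}\big[t(A-A^T)-(t-1)A\big]\overline{w'}=v^{T}A^{T}(A-tA^T)^{-1}(A-tA^T)\,\overline{w'}=v^{T}A^{T}\overline{w'}\in\Lambda,\]
since $A^{T}$ has integer entries. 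Hence the two expressions agree in $Q/\Lambda$. The same identity $(tA-A^T)^{T}(t-1)=-(A-tA^T)(t-1)$ shows the candidate pairing descends to $\Lambda^{2g}/(tA-A^T)\Lambda^{2g}$ and is well defined (so that evaluation at $(Av,Aw')$ does compute it on $(u,w)$); sesquilinearity and the Hermitian property are then formal. Together these give the commutative diagram in the statement.

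I expect the genuinely fiddly part to be the bookkeeping of signs and transposes — reconciling the orientation conventions of Theorem~\ref{thm:computebl-intro}, the sign convention for the Seifert matrix, and the sign of $\underset{F}{\cdot}$ — so that the leftover term comes out as an honest integer matrix. Conceptually, that leftover (here $v^{T}A^{T}\overline{w'}$) being integral is exactly why the ``wrong'' matrix $A-tA^T$, rather than $tA-A^T$, governs the pairing.
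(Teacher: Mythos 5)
Your proposal is correct and follows essentially the same route as the paper: deduce the theorem from Theorem~\ref{thm:computebl-intro} via the dictionary $\iota_+\leftrightarrow A$, $\iota_-\leftrightarrow A^T$, $\underset{F}{\cdot}\leftrightarrow A-A^T$, prove that $\iota_+$ surjects onto the cokernel of $\iota_+-t^{-1}\iota_-$ (the paper's Lemma~\ref{lemma:iota-+-surjective}, which it proves by the explicit identity $w=\iota_+(u+tv)+(\iota_+-t^{-1}\iota_-)(-tv)$ rather than your reduction modulo primes, though both rest on $\det(A-A^T)=\pm1$), and then verify the formula on the image of $\iota_+$ and extend by well-definedness modulo $(tA-A^T)\Lambda^{2g}$. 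Your organization of the algebra as ``the difference of the two expressions is the integral matrix $v^TA^T\ol{w'}$'' is a mild repackaging of the paper's computation, not a different argument.
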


Kearton~\cite{Ke75} first gave a description of the Blanchfield pairing in terms of the Seifert matrix. On \cite[page~159]{Ke75} it is stated, in term of the above notation, that for any $i,j\in \{1,\dots,2g\}$, we have
\[ \Bl_K(e_i,e_j)\,\,=\,\, (1-t)\big((tA-A^T)^{-1}\big)_{ij}.\]
Kearton \cite[p.~150]{Ke75} uses the same notion of sesquilinearity as we do, thus his formula translates into the statement that the Blanchfield form is isometric to
\[ \ba{rcl} \Lambda^{2g}/(tA-A^T) \times \Lambda^{2g}/(tA-A^T)&\to & Q/\Lambda\\
(v,w)&\mapsto& {v}^T(t-1)(tA - A^T)^{-1}\ol{w}.\ea\]
But the reader who did the above exercise will see that this form is not even well-defined.

Modulo the subtlety just discussed, the statement of Theorem~\ref{mainthm} is well known.  As well as its appearance in \cite{Ke75} discussed above, an argument was given in~\cite[Proposition~14.3]{Le77}.  An algebraic generalisation can be found in~\cite{Ra03}.
Our proof is quite different from the one in \cite{Le77}, and has the virtue of being much more amenable to generalisation. We have already seen that our proof covers the case of fibred 3-manifolds, and in a future paper we will use our approach to provide a way to calculate twisted Blanchfield pairings defined using unitary representations of $\pi_1(X_K)$. In these more general situations our proofs will become significantly harder to parse. This motivated us to write up the proof in the present more simple case separately.

The description of the Blanchfield form of a knot in Theorem~\ref{mainthm} is arguably slightly unsatisfactory, in so far as the presentation matrix for the Alexander module is not the inverse of the matrix describing the Blanchfield form.
For the sake of completeness, we give such a description of the Blanchfield form in terms of the Seifert matrix.  This first appeared in~\cite{BF15}, following~\cite{Ko89}.

Let $K$ be an oriented knot and let $A$ be any matrix of size $2k$ which is $S$-equivalent to a Seifert matrix for $K$.
Note that $A-A^T$ is skew-symmetric and it satisfies $\det(A-A^T)=(-1)^k$.
After possibly replacing $A$ by $PAP^T$ for an appropriate $P$, the following equality holds:
\[ A-A^T= \bp 0 & \id_k \\ -\id_k &0 \ep.\]
As in \cite[Section~4]{Ko89}, we define $M_K(t)$ to be the matrix
\[
\bp (1-t^{-1})^{-1}\id_k &0\\ 0&\id_k \ep A \bp \id_k&0\\0&(1-t)\id_k\ep
+\bp \id_k &0 \\ 0&(1-t^{-1})\id_k \ep A^T\bp (1-t)^{-1}\id_k &0 \\ 0&\id_k
\ep.\]
An elementary calculation shows  that  the matrix $M_K(t)$  is a hermitian matrix defined over $\L$, and that for any complex number $z \in S^1 \sm\{1\}$ the signature $\op{sign}(M_K(z))$ equals  the Levine-Tristram signature $\sigma_z(K)$.
The following is \cite[Proposition~2.1]{BF15}, which in turn was proved assuming Theorem~\ref{mainthm}.
The notion of sesquilinearity used in \cite{BF15} was different to ours, so we altered the statement accordingly, replacing $M_K(t)$ with $M_K(t^{-1})$ as the matrix for the Blanchfield pairing.

\begin{proposition}\label{prop:blakt}
Let  $K\subset S^3$ be an oriented knot and let $M_K(t)$ be as above.
The Blanchfield pairing of $K$ is isometric to the pairing
\[ \ba{rcl} \Lambda^{2k}/M_K(t) \times \Lambda^{2k}/M_K(t)&\to & Q/\Lambda\\
(v,w)&\mapsto& -{v}^T(M_K(t^{-1}))^{-1}\ol{w}.\ea\]
\end{proposition}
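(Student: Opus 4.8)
The plan is to deduce Proposition~\ref{prop:blakt} from Theorem~\ref{mainthm} by a purely algebraic change-of-presentation argument, since both statements describe the same Blanchfield pairing $\Bl_K$ but with respect to different presentations of the Alexander module. Concretely, I would proceed in the following steps.

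\textbf{Step 1: Reduce to an algebraic isometry statement.} By Theorem~\ref{mainthm}, the Blanchfield pairing $\Bl_K$ is isometric to the pairing on $\Lambda^{2k}/(tA - A^T)$ given by $(v,w)\mapsto v^T(t-1)(A-tA^T)^{-1}\ol{w}$, where here I take $A$ to be a Seifert matrix representative; since $S$-equivalent matrices give the same Blanchfield pairing up to isometry (which follows from Theorem~\ref{mainthm} applied to different Seifert surfaces, or can be checked directly), we may assume $A$ is of size $2k$ with $A - A^T = \bp 0 & \id_k \\ -\id_k & 0\ep$ as in the hypothesis. So it suffices to exhibit an isomorphism of $\Lambda$-modules
\[ \Psi\colon \Lambda^{2k}/(tA-A^T) \xrightarrow{\cong} \Lambda^{2k}/M_K(t)\]
that carries the pairing $(v,w)\mapsto v^T(t-1)(A-tA^T)^{-1}\ol{w}$ to the pairing $(v,w)\mapsto -v^T(M_K(t^{-1}))^{-1}\ol{w}$.

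\textbf{Step 2: Identify the presentation matrices and the conjugating matrix.} I would first record the elementary identities linking $M_K(t)$ to $A$. Writing $D(t) = \bp \id_k & 0 \\ 0 & (1-t)\id_k\ep$ and $E(t) = \bp (1-t^{-1})^{-1}\id_k & 0 \\ 0 & \id_k\ep$, one has from the displayed definition $M_K(t) = E(t) A\, D(t)^{?} + \cdots$; I would compute $M_K(t)$ cleanly, express it as a product $U(t)(tA-A^T)V(t) \cdot (\text{unit})$ or similar up to units in $\Lambda$ and the skew form $A-A^T$, using $tA - A^T = t(A - A^T) + (t-1)A^T$ and the normalisation $A-A^T = \bp 0 & \id \\ -\id & 0\ep$. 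The key computational fact, already asserted in the excerpt, is that $M_K(t)$ is hermitian over $\Lambda$; I would verify $M_K(t)^T\!\mid_{t\mapsto t^{-1}} = M_K(t)$, and also that $M_K(t)$ and $tA-A^T$ have determinants differing by a unit of $\Lambda$, so that they present isomorphic torsion $\Lambda$-modules. Then I would produce explicit invertible-over-$\Lambda$ matrices $X(t), Y(t)$ with $X(t)(tA-A^T)Y(t) = M_K(t)$; the block-diagonal shapes of $D$ and $E$ make this a $2\times 2$ block manipulation.

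\textbf{Step 3: Transport the pairing and check the sign.} Given $X(t)(tA-A^T)Y(t) = M_K(t)$, the induced module isomorphism $\Psi$ is (right) multiplication by $Y(t)$ (up to which side one writes the relations), and under such a change the matrix $(t-1)(A-tA^T)^{-1}$ representing the pairing transforms by $\Psi$ to $\ol{Y(t)}^{-1}\cdot(t-1)(A-tA^T)^{-1}\cdot \ol{X(t)}$ — here I must be careful with the conjugate-linearity in the second variable, so the conjugation $t\mapsto t^{-1}$ enters. Expanding, and using the relation between $M_K(t)$ and $A - tA^T$ obtained by substituting $t\mapsto t^{-1}$ into the factorization of Step~2 (note $A - tA^T = -t(A^T - t^{-1}A) = -t\,\ol{(tA-A^T)}$ up to transpose/sign bookkeeping), the transported matrix should collapse to $-(M_K(t^{-1}))^{-1}$. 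The appearance of the $t^{-1}$ inside $M_K$, and the overall minus sign, are exactly what the remark after the proposition flags as the correction to the $\cite{BF15}$ convention, so matching them is the crux.

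\textbf{Main obstacle.} The genuine difficulty is entirely bookkeeping: keeping straight the three sign/conjugation conventions simultaneously — (i) the asymmetry between $tA-A^T$ (module) and $A-tA^T$ (pairing) in Theorem~\ref{mainthm}, (ii) the hermitian-ness of $M_K(t)$ and the substitution $t\mapsto t^{-1}$ in its inverse, and (iii) the conjugate-linearity of $\Bl_K$ in the second variable under the change of basis. There is no conceptual content beyond Theorem~\ref{mainthm}; the risk is purely that a transpose or a sign is dropped, so I would organize Step~2 around the single clean factorization $M_K(t) = C(t)^T\,(tA - A^T)\,\ol{C(t)}$ for an explicit block matrix $C(t)$ built from $\id_k$, $(1-t)\id_k$ and $(1-t^{-1})^{-1}\id_k$, after which Step~3 becomes a one-line substitution. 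Verifying that such a $C(t)$ exists and lies in $\GL(2k,\Lambda)$ is the one place an honest (if short) calculation is unavoidable.
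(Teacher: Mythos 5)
Your overall strategy---deducing the proposition from Theorem~\ref{mainthm} by a change of presentation---is the right one; note that the paper itself gives no proof at all here, deferring entirely to \cite[Proposition~2.1]{BF15}, which argues along exactly these lines. However, the concrete mechanism you propose in Step~2 cannot work. Unwinding the definition of $M_K(t)$, using $(1-t)A+(1-t^{-1})A^T=(t^{-1}-1)(tA-A^T)$, gives the factorization
\[
M_K(t)\;=\;\bp -\id_k & 0\\ 0& (t^{-1}-1)\id_k\ep\,(tA-A^T)\,\bp (1-t)^{-1}\id_k&0\\0&\id_k\ep ,
\]
in which the left factor is over $\Lambda$ but \emph{not} invertible over $\Lambda$, and the right factor is not even defined over $\Lambda$. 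More decisively, the ``single clean factorization'' $M_K(t)=C(t)^T(tA-A^T)\ol{C(t)}$ with $C(t)\in\GL(2k,\Lambda)$ around which you want to organise the proof does not exist: every unit of $\Lambda$ has the form $\pm t^j$, so $\det C\cdot\ol{\det C}=1$ for any $C\in\GL(2k,\Lambda)$, whereas the displayed factorization gives $\det M_K(t)=(-1)^k t^{-k}\det(tA-A^T)$, and $(-1)^k t^{-k}\neq 1$ for $k\geq 1$; the same count rules out absorbing the discrepancy into a unit scalar. Relatedly, your assertion that agreement of determinants up to units forces the two matrices to present isomorphic modules is false in general (compare $\bp p&0\\0&p\ep$ with $\bp p^2&0\\0&1\ep$); the modules here are isomorphic, but not for that reason.

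What is actually needed is to show that $X=\bp -\id_k&0\\0&(t^{-1}-1)\id_k\ep$, though not invertible over $\Lambda$, descends to a homomorphism $\Lambda^{2k}/(tA-A^T)\to\Lambda^{2k}/M_K(t)$ --- this part is immediate, since $X(tA-A^T)=M_K(t)\bp(1-t)\id_k&0\\0&\id_k\ep$ --- and then to prove separately that the descended map is bijective. Surjectivity amounts to $X\Lambda^{2k}+M_K(t)\Lambda^{2k}=\Lambda^{2k}$, which one checks by reducing modulo $t-1$ and using the $-\id_k$ block in $M_K(1)$; injectivity then follows because both torsion modules have order $\Delta_K(t)$ up to units. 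Once this is in place your Step~3 does collapse to a one-line substitution: $X^T M_K(t^{-1})^{-1}\ol{X}=(t^{-1}-1)(t^{-1}A-A^T)^{-1}=(1-t)(A-tA^T)^{-1}$, so the transported pairing matrix is exactly $-(M_K(t^{-1}))^{-1}$, with the sign and the $t\mapsto t^{-1}$ as stated. So the gap is not mere bookkeeping: the existence and bijectivity of the presentation change is the genuine content, and the $\GL(2k,\Lambda)$-congruence you propose to supply it is obstructed.
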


Proposition~\ref{prop:blakt} allows us to give a quick proof that the Blanchfield form of a knot is hermitian and nonsingular. Both statements have been well known, but it is notoriously tricky to write down a rigourous proof that the Blanchfield form is hermitian.

\begin{corollary}\label{cor:hermitian-nonsingular}
For any oriented knot $K$ the Blanchfield pairing is hermitian and nonsingular. This means that for any $v,w\in H_1(X_K;\Lambda)$ we have $\Bl(v,w)=\ol{\Bl(w,v)}$ and the map $H_1(X_K;\Lambda)\to \ol{\hom_{\Lambda}(H_1(X_K;\Lambda),Q/\L)}$ given by $v\mapsto \Bl(-,v)$ is an isomorphism.
\end{corollary}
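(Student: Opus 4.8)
The plan is to read everything off the explicit matrix description in Proposition~\ref{prop:blakt}. Put $n := 2k$ and $H := M_K(t)$. By the discussion preceding Proposition~\ref{prop:blakt}, $H$ is a hermitian matrix over $\Lambda$, i.e.\ $H^T = \ol H$, and $\det H \ne 0$; thus $H$ presents the $\Lambda$-torsion module $M := H_1(X_K;\Lambda) \cong \Lambda^n/H\Lambda^n$, and, since $M_K(t^{-1}) = \ol H$, Proposition~\ref{prop:blakt} identifies $\Bl_K$ with the pairing on $\Lambda^n/H\Lambda^n$ given by $(v,w) \mapsto -v^T(\ol H)^{-1}\ol w$. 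All of the work below takes place in this model; in particular we take the well-definedness and sesquilinearity of this pairing (the exercise posed after Theorem~\ref{mainthm}) as part of the content of Proposition~\ref{prop:blakt}.

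For the hermitian property I would argue formally: a pairing of the shape $(v,w)\mapsto v^T B\ol w$ satisfies $\Bl_K(v,w) = \ol{\Bl_K(w,v)}$ for all $v,w$ exactly when $\ol{B}^T = B$. Here $B = -(\ol H)^{-1}$, and
\[ \ol{B}^T \;=\; -\bigl(\ol{(\ol H)^{-1}}\bigr)^{T} \;=\; -\bigl(H^{-1}\bigr)^{T} \;=\; -\bigl(H^{T}\bigr)^{-1} \;=\; -(\ol H)^{-1} \;=\; B, \]
where the first equality uses that $\ol{\,\cdot\,}$ is a ring involution and the penultimate one is the hermitian relation $H^T = \ol H$. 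Hence $\Bl_K$ is hermitian.

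For nonsingularity the idea is to compute $\ol{\hom_\Lambda(M,Q/\Lambda)}$ and then push the adjoint of $\Bl_K$ through this identification. Applying $\hom_\Lambda(M,-)$ to $0\to\Lambda\to Q\to Q/\Lambda\to 0$, and using that $Q$, being the fraction field of the domain $\Lambda$, is an injective $\Lambda$-module (an immediate check via Baer's criterion, so $\Ext^1_\Lambda(M,Q)=0$), together with $\hom_\Lambda(M,Q)=0$ (as $M$ is torsion and $Q$ is a domain), the connecting homomorphism gives an isomorphism $\hom_\Lambda(M,Q/\Lambda)\cong\Ext^1_\Lambda(M,\Lambda)$. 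Feeding the free resolution $0\to\Lambda^n\xrightarrow{H}\Lambda^n\to M\to 0$ (exact because $H$ is injective, $\det H\ne 0$) into $\hom_\Lambda(-,\Lambda)$ identifies $\Ext^1_\Lambda(M,\Lambda)$ with $\coker\bigl(H^T\colon\Lambda^n\to\Lambda^n\bigr)=\Lambda^n/\ol H\Lambda^n$, again using $H^T=\ol H$; applying the involution $\ol{\,\cdot\,}$ entrywise then gives a $\Lambda$-isomorphism $\ol{\Lambda^n/\ol H\Lambda^n}\cong\Lambda^n/H\Lambda^n = M$. It remains to chase a class through these maps. If $\hat v\in\Lambda^n$ lifts $v\in M$, then $\Bl_K(-,v)$ pulls back along $\Lambda^n\to M$ to the map $x\mapsto -\ol{\hat v}^T H^{-1}x$, which lands in $Q$; precomposing with $H$ yields the $\Lambda$-valued homomorphism $x\mapsto -\ol{\hat v}^T x$, that is, the class of $-\ol{\hat v}$ in $\Lambda^n/\ol H\Lambda^n$, which conjugates to the class of $-\hat v=-v$ in $M$. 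Thus the adjoint $v\mapsto\Bl_K(-,v)$ becomes, up to the global sign, the identity of $M$; since the two auxiliary maps are isomorphisms and $-\op{id}_M$ is one, the adjoint is an isomorphism, i.e.\ $\Bl_K$ is nonsingular.

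I expect the only genuinely fiddly part to be this last step: threading $\Bl_K$ through the $\Ext$-computation while keeping the involution $t\mapsto t^{-1}$, the matrix transpose, the global sign, and the row-versus-column identification of $\hom_\Lambda(\Lambda^n,\Lambda)$ with $\Lambda^n$ all mutually consistent. Everything else is formal and uses nothing about $K$ beyond Proposition~\ref{prop:blakt}: only that $\Lambda$ is a domain with fraction field $Q$, and that $H$ is hermitian with nonzero determinant.
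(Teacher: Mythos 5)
Your reduction to the matrix model of Proposition~\ref{prop:blakt} and your verification of the hermitian property are essentially identical to what the paper does: the paper likewise observes that it suffices to treat a hermitian matrix $M(t)$ over $\Lambda$ with $\det M(t)\neq 0$, and proves $\ol{w^TM(t^{-1})^{-1}\ol v}=v^TM(t^{-1})^{-1}\ol w$ by the same transpose-and-involute computation. For nonsingularity, however, you take a genuinely different route. The paper invokes \cite[Proposition~3.4.1]{Ra81}: the linking form corresponds to a $1$-dimensional symmetric complex $C^0\xrightarrow{M(t)}C^1$ over $C_1\xrightarrow{M(t)}C_0$ with identity vertical maps, which is therefore a Poincar\'e complex, and nonsingularity of the form is read off from Ranicki's equivalence. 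You instead compute $\ol{\hom_\Lambda(M,Q/\Lambda)}$ directly: the Bockstein-type isomorphism $\hom_\Lambda(M,Q/\Lambda)\cong\Ext^1_\Lambda(M,\Lambda)$ (using that $Q$ is injective over the domain $\Lambda$ by Baer, and $\hom_\Lambda(M,Q)=0$ for $M$ torsion), the identification $\Ext^1_\Lambda(M,\Lambda)\cong\coker(H^T)=\Lambda^n/\ol H\Lambda^n$ from the length-one free resolution, and a chase showing the adjoint of the pairing becomes $-\Id_M$ under these identifications. I checked the chase and the signs/involutions do come out as you claim. Your argument is self-contained and elementary, at the cost of the bookkeeping you acknowledge; the paper's is shorter on the page but outsources the content to Ranicki's formalism, which the reader must unpack. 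Both are valid; yours has the additional merit of exhibiting the adjoint explicitly as $-\Id$ in coordinates, which makes the nonsingularity completely transparent.
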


\begin{proof}
In light of Proposition~\ref{prop:blakt} it suffices to show that if $M(t)$ is a hermitian $k\times k$-matrix over $\Lambda$ with $\det(M(t))\ne 0$, then the pairing
\[ \ba{rcl} \Lambda^{k}/M(t) \times \Lambda^{k}/M(t)&\to & Q/\Lambda\\
(v,w)&\mapsto& {v}^T(M(t^{-1}))^{-1}\ol{w}.\ea\]
is hermitian and nonsingular. To prove the first statement, suppose that $M(t) = \ol{M(t)}^T$ and observe that
\[\ol{w^TM(t^{-1})^{-1}\ol{v}} = \ol{w^TM(t^{-1})^{-1}\ol{v}}^T = v^T\ol{(M(t^{-1})^{-1})}^T\ol{w} = v^T\left(\ol{M(t^{-1})}^T\right)^{-1}\ol{w} = v^TM(t^{-1})^{-1}\ol{w}.\]

Now suppose that $\det(M(t)) \neq 0$.  Then the linking form above corresponds, under the equivalence of \cite[Proposition~3.4.1]{Ra81} between hermitian linking forms on $\L$-torsion modules and $Q$-acyclic $1$-dimensional symmetric complexes over $\L$ (see Sections 1.1 and 3.1 of \cite{Ra81} for the definitions of these notions), to the symmetric complex
\[\xymatrix @C+1.5cm { C^0  \ar[r]^{\ol{M(t)}^T=M(t)} \ar[d]_{\Id} & C^1 \ar[d]_{\Id} \\ C_1 \ar[r]^{M(t)} & C_0 }\]
with all chain and cochain groups $C_i,C^i$ isomorphic to $\L^{k}$.  Since the vertical maps are isomorphisms, this is a Poincar\'{e} complex, and so the linking form is nonsingular, by the last sentence of~\cite[Proposition~3.4.1]{Ra81}.
\end{proof}

\subsection{High dimensional knots}

  The techniques of this paper also extend fairly easily to prove the analogous formula for the Blanchfield pairing of a high odd dimensional knot $K \colon S^{2k-1} \hookrightarrow S^{2k+1}$, as was also considered in \cite{Ke75}, \cite{Le77}.  For such knots, let $X_K:= S^{2k+1} \sm \nu K$, and let $F \subset S^{2k+1}$ be a $2k$-dimensional Seifert manifold, i.e.\ $F$ is an oriented, connected submanifold of $S^{2k+1}$ with $\partial F = K$.
  For a $\L$-module $N$ let $\op{tor}N$ be the $\Z$-torsion submodule, and let $fN:=N/\op{tor}N$ be the free part. We consider the Seifert pairing
  \[ \ba{rcl} fH_k(F;\Z) \times fH_k(F;\Z) &\to& \Z\\
  (v,w)&\mapsto & \op{lk}(v,w^+),\ea\]
  where $w$ denotes a positive push-off of $w$ into the complement of $F$.
  We denote a corresponding size $2g$ Seifert matrix by $A$. The matrix $A+(-1)^kA^T$ is invertible.
  The proof of Theorems~\ref{thm:computebl-intro} and \ref{thm:blanchfield-k-intro} can be modified by setting the indices $1,2$ to be equal to $k,k+1$ respectively, and working with $\Z$-torsion free modules, to show that the Blanchfield pairing
  \[\Bl \colon fH_k(X_K;\L) \times fH_k(X_K;\L) \to Q/\L \]
is isometric to
\[ \ba{rcl} \Lambda^{2g}/(tA-A^T) \times \Lambda^{2g}/(tA-A^T)&\to & Q/\Lambda\\
(v,w)&\mapsto& {v}^T(t-1)(A - tA^T)^{-1}\ol{w}.\ea\]

\subsubsection*{Organisation of the paper}

In Section~\ref{section:definitions} we recall the definition of the Blanchfield form and we recall several tools and results needed in the study of twisted coefficients.
In Section~\ref{section:main-technical-theorem} prove Theorem~\ref{thm:computebl-intro}, which determines the Blanchfield pairing starting from a dual surface. In Section~\ref{proof-of-mainthm} we will use
Theorem~\ref{thm:computebl-intro} to deduce Theorem~\ref{thm:blanchfield-k-intro}.
Finally in Section~\ref{section:Bl-form-fibred-manifold} we apply Theorem~\ref{thm:computebl-intro} to give the proof of Corollary~\ref{cor:fib}.

\subsubsection*{Acknowledgments.}
We are grateful to Anthony Conway for giving us the impetus for the project. We also wish to thank Maciej Borodzik, David Cimasoni, Greg Friedman, Matthias Nagel and Patrick Orson for helpful conversations.
The first author was supported by the SFB 1085 ``Higher invariants'', funded by the Deutsche Forschungsgemeinschaft (DFG).  The second author is supported by the NSERC grant ``Structure of knot and link concordance.''  Part of the work of the paper was done while the second author was a visitor at the Max Planck Institute for Mathematics in Bonn.

\section{Conventions and definitions}\label{section:definitions}

\subsection{Conventions}

We will use the following conventions.
\bn
\item All manifolds are assumed to be connected, compact and oriented.
Furthermore all 3-manifolds are assumed to be  closed or  to have toroidal boundary.
\item We make the identifications
\[ H^1(X;\Z)=H_2(X,\partial X;\Z)=\op{Hom}(\pi_1(X),\Z)=\op{Hom}(\pi_1(X),\langle t\rangle).\]
\item Identify $\zt$ with the group ring of the infinite cyclic group $\Z \cong \ll t\rr$.   Denote $\L:= \zt$ and $Q:=\Q(t)$, the field of fractions of $\L$. We view $\Lambda=\zt$ as a ring with involution where, as usual, the involution is induced by $\ol{t}:=t^{-1}$. Extend this to an involution on $Q=\Q(t)$ and on $Q/\Lambda$. Given a $\L$-module $M$, denote the involuted $\L$-module by $\ol{M}$. This means that $\ol{M}$ as an abelian group is the same as $M$, but given $p\in \L$, the multiplication by $p$ on $\ol{M}$ is defined as left multiplication by $\ol{p}$ on $M$.
\item In an attempt to avoid cluttering the paper and diagrams with more notation than is already necessary, we adopt the convention that for a morphism $f\colon C\to D$ and a covariant functor $F$, we denote the induced morphism $f_* \colon F(C)\to F(D)$ by $f$ as well. On the other hand, if $F$ is contravariant, then we write $f^*\colon F(D)\to F(C)$.
\item When chain, cochain, and homology groups appear without coefficients shown, $\Z$-coefficients are implicit.
\en

\subsection{Twisted homology and cohomology groups}\label{section:twisted-homology}
Let $X$ be a $3$-manifold together with a primitive class $\phi \in H^1(X;\Z)$.
We write $\pi=\pi_1(X)$.
Denote  the infinite cyclic cover of $X$ corresponding to $\phi$ by $p\colon \wti{X}\to X$.
Let $Z\subset Y\subset X$ be two subspaces, and write $\wti{Z}=p^{-1}(Z)$ and $\wti{Y}=p^{-1}(Y)$.
The group $\ll t\rr$ acts on $C_*(\wti{X})$ via deck transformations.
We will view $C_*(\wti{Y},\wti{Z})$ as a chain complex of free $\Lambda=\zt$-modules.

Let $M$ be a $\Lambda$-module.
Write
\[ \ba{rcl} C_*(Y,Z;M)&=&M\otimes_{\L} C_*(\wti{Y},\wti{Z}),\\
C^*(Y,Z;M)&=&\hom_{\L}\Big(\ol{C_*\big(\wti{Y},\wti{Z}\big)},M\Big).\ea\]
These are chain complexes of  $\zt$-modules.
Denote the corresponding homology and cohomology modules by $H_*(Y,Z;M)$ and $H^*(Y,Z;M)$.
We adopt the following conventions.

\bn
\item As usual we drop the $Z$ from the notation when $Z=\emptyset$.
\item With $M=\L$ we will use the canonical isomorphism $\Lambda\otimes_{\Lambda} C_*(\wti{Y},\wti{Z})\xrightarrow{\cong} C_*(\wti{Y},\wti{Z})$ to identify the chain complexes and the corresponding homology modules.
\item With  $M=\L$, we obtain $H_*(X;\L)$ and we refer to the  first homology $H_1(X;\L)$ as  the \emph{Alexander module} of~$X$.  We assume that $H_1(X;\L)$ is a $\L$-torsion module, so that $H_1(X;Q)=0$.
\item If  $Y \subset X$ is connected with $\phi|_Y=0$, pick a lift $Y_0$ of $Y$ to $\wti{X}$ and denote  the corresponding lift of $Z$ to $\wti{X}$ by $Z_0\subset Y_0$. The inclusion induced map
\[ \eta\colon  M\otimes_{\Z} C_*(Y_0,Z_0)\to M\otimes_{\L} C_*(\wti{Y},\wti{Z})=C_*(Y,Z;M)\]
is an isomorphism of $\Lambda$-chain complexes. Similarly, the map
\[ \hspace{1cm}\ba{rcl} \xi \colon M\otimes_{\Z}\hom_{\Z}(C_*(Y_0,Z_0),\Z)&\to & \hom_{\L}\Big(\ol{\L\otimes_{\Z} C_*(Y_0,Z_0)},M\Big)=\hom_{\L}\Big(\ol{C_*(\wti{Y},\wti{Z})},M\Big)\\
p\otimes f&\mapsto & \big(g\otimes \sigma\mapsto f(\sigma)\ol{g}\cdot p\big)\ea\]
is an isomorphism  of $\Lambda$-cochain complexes.
In the sequel we will use these maps to  make the identifications
\[ \ba{rcl} C_*(Y,Z;M)&=&M\otimes_{\Z} C_*(Y_0,Z_0),\\
C^*(Y,Z;M)&=&M\otimes_{\Z}\hom_{\Z}(C_*(Y_0,Z_0),\Z)\ea\]
and
\[ \ba{rcl} H_*(Y,Z;M)&=&M\otimes_{\Z} H_*(Y_0,Z_0),\\
H^*(Y,Z;M)&=&M\otimes_{\Z} \hom_{\Z}(H_*(Y_0,Z_0),\Z).\ea\]
If we want to specify the maps then we refer to them by $\eta$.
\en

\subsection{The evaluation map and the Bockstein map}
\label{section:ev-bockstein}
For $M=\Lambda, Q$ or $M=Q/\Lambda$ the map
\[ \ba{rcl}\kappa\colon \Hom_{\Lambda}\Big(\ol{C_*(\wti{Y},\wti{Z})},M\Big)&\to &
\ol{\op{Hom}_{\L}\big( C_*(\wti{Y},\wti{Z}),M\big)}\\
f&\mapsto & \big(\sigma \mapsto \ol{f(\sigma)}\big)
\ea
\]
gives rise to a well-defined isomorphism of $\Lambda$-modules
\[ \kappa\colon H^i(Y,Z;M)\to H_i\big(\ol{\op{Hom}_{\L}(C_*(Y,Z;\L),M)}\big),
\]
 We also consider the evaluation map
\[ \ba{rcl}
\ev\colon H_i\big(\ol{\op{Hom}_{\L}(C_*(Y,Z;\L),M)}\big)&\to &
\ol{\hom_{\Lambda}\big(H_i(C_*(Y,Z;\Lambda)),M\big)}.
\ea\]
The composition $\ev \circ \kappa$ is sometimes referred to in the literature as the Kronecker evaluation map.

Let $C_*$ be a chain complex  of free  $\Lambda$-modules.
Then there exists a short exact sequence
\[0\to   \Hom_{\L}(C_*,\L)
\to \Hom_{\L}(C_*,Q)\to \Hom_{\L}(C_*,Q/\Lambda)\to 0\]
 of $\Lambda$-modules.
This short exact sequence gives rise to a long exact sequence
\[\ba{rcl} \dots & \to&   H_i(\Hom_{\L}(C_*,\L))
\to H_i(\Hom_{\L}(C_*,Q))\to H_i(\Hom_{\L}(C_*,Q/\Lambda))\to \\
 & \xrightarrow{\op{BS}}&   H_{i+1}(\Hom_{\L}(C_*,\L))
\to \dots\ea
\]
The coboundary map $\op{BS}$ in this long exact sequence is called the Bockstein map.
For example, taking $C_*$ to be the involuted chain complex $\ol{C_*(\wt{X})}$
we obtain a Bockstein map $\op{BS} \colon H^1(X;Q/\Lambda)\to H^2(X;\Lambda)$.

\subsection{Definition of the Blanchfield pairing}
In the following let $X$ be a  $3$-manifold together with a primitive class $\phi \in H^1(X;\Z)$, and suppose that $H_1(X;Q)=0$.
We write $\pi=\pi_1(X)$.

 We denote the composition of the following maps by $\Psi$.
\[ \xymatrix@R0.6cm{H_1(X;\Lambda)\ar@/_2pc/[rddrr]_\Psi\ar[r]& H_1(X,\partial X;\Lambda)\ar[r]^-{\PD}& H^2(X;\Lambda)\ar[r]^-{\BS^{-1}}_-\cong& H^1(X;Q/\Lambda)\ar[d]^-{\kappa}\\
&&& H_1\big(\ol{\hom_{\Lambda}(C_*(X;\Lambda),Q/\Lambda)}\big)\ar[d]^{\ev}\\
&&& \ol{\hom_{\Lambda}(H_1(X;\Lambda),Q/\Lambda)}}.\]
Here we consider the following maps:
\bn
\item the first map is the usual map from absolute to relative homology;
\item PD denotes (the inverse of) Poincar\'e duality, i.e.\ the inverse of capping with the fundamental class of $X$;
\item the maps $\kappa$ and $\ev$ are defined in Section~\ref{section:ev-bockstein}.
\en
We obtain a pairing
\[ \ba{rcl} \op{Bl}\colon H_1(X;\Lambda)\to H_1(X;\Lambda)&\to& Q/\Lambda\\
(a,b)&\mapsto &\Psi(b)(a)\ea\]
which is referred to as the \emph{Blanchfield pairing of $(X,\phi)$}.
It follows immediately from the definitions of the maps that this pairing is sesquilinear over $\Lambda$, in the sense that $\op{Bl}(pa,qb)=p\op{Bl}(a,b)\ol{q}$ for any $a,b \in H_1(X;\Lambda)$ and $p,q\in \Lambda$.  As we already mentioned in Corollary~\ref{cor:hermitian-nonsingular}, the Blanchfield pairing is also hermitian and nonsingular.

\section{The Blanchfield pairing from a dual surface}\label{section:main-technical-theorem}

\subsection{Relating the dual surface to the Blanchfield pairing}
\label{section:31}
 Let $X$ be a  $3$-manifold  together with a class $\phi \in H^1(X;\Z)$, and suppose that $H_1(X;Q)=0$.
Before we state our main result we introduce some more notation and conventions.
Let $F$ be a connected, properly embedded, oriented surface dual  to $\phi$.
\bn
\item  Pick a tubular neighbourhood $F\times I=F\times [-1,1]$ in $X$ and  write $Y=X\sm (F\times (-1,1))$. We take the slight liberty of writing $F\times \pm 1$ instead of $F\times \{\pm 1\}$.
\item Pick a lift $F_0$ of $F$ to the infinite cyclic cover $\wti{X}$ of $X$.
Then $F_0\times [-1,1]$ is also a lift of $F\times [-1,1]$.  Denote  the  unique lift of $Y$ to $\wti{X}$ with the property that it agrees with $F_0\times [-1,1]$ on $F\times 1$ by $Y_0$. Note that with this convention $F_0\times -1$ lies in $t^{-1}Y_0$.
\item  Denote  the inclusion map $F_0\to \wti{X}$ by $\iota$.
\item Denote the composition of the maps $F\to F\times \pm 1\to Y$ by $\iota_\pm$.
\item By a slight abuse of notation we make the identifications $F=F_0$ and $Y=Y_0$.
 Furthermore, by a slight abuse of notation we do not distinguish in our notation between the inclusion maps and the induced maps on homology and cohomology.
\item Denote  the intersection pairing on $H_1(F;\Z)$ by $\underset{F}{\cdot}$. We extend this to a hermitian pairing
\[ \ba{rcl} \Lambda \otimes H_1(F;\Z)\times \Lambda \otimes H_1(F;\Z)&\to & \Lambda\\
(p \otimes v,q \otimes w)&\mapsto & p\big(v\underset{F}{\cdot} w\big)\ol{q}\ea\]
that we also denote by $\underset{F}{\cdot}$. Similarly we extend it to a hermitian pairing on $Q\otimes H_1(F;\Z)$.
\en

Since the Mayer--Vietoris sequence
\[ H_2(X;\Lambda)\to \L \otimes H_1(F)\xrightarrow{\iota_+-t^{-1}\iota_-}\L \otimes H_1(Y)\to H_1(X;\Lambda)\]
is exact and since $H_*(X;Q)=0$ we have that
the map $\iota_+-t^{-1}\iota_-\colon Q \otimes H_1(F)\to Q \otimes H_1(Y)$ is an isomorphism. (In the special case that $X=X_K$ is a knot exterior, see for example \cite{Le77} and the proof of Proposition~\ref{thm:blanchfield-k} for details.)
Now we can state our main technical theorem.

\begin{theorem}\label{thm:computebl}
Let $X$ be a  $3$-manifold together with a class $\phi \in H^1(X;\Z)$, and suppose that $H_1(X;Q)=0$.  
Then for any $v,w\in \L \otimes  H_1(F)$ we have
\[ \op{Bl}(\iota v,\iota w)=-(\iota_+-t^{-1}\iota_-)^{-1}(\iota_+(v))\underset{F}{\cdot} w.\]
\end{theorem}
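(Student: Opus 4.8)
The plan is to compute the Blanchfield pairing directly from its definition, tracking a class $\iota v \in H_1(X;\Lambda)$ through the chain of maps $\Psi = \ev \circ \kappa \circ \BS^{-1} \circ \PD \circ (\text{incl})$ at the chain level, using the dual surface $F$ and its complement $Y$ to produce explicit chains and cochains. The key geometric input is that cutting $X$ along $F$ produces $Y$, whose infinite cyclic cover $\wti Y = \bigsqcup_{n} t^n Y_0$ is a disjoint union of fundamental domains, and the infinite cyclic cover $\wti X$ is built from these by gluing $t^n Y_0$ to $t^{n-1}Y_0$ along $t^n F_0 \times 1 = t^{n-1}(F_0 \times -1)$ (this uses the normalisation fixed in the conventions, that $F_0 \times -1 \subset t^{-1}Y_0$). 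Since $H_1(X;Q) = 0$, after tensoring with $Q$ the Mayer--Vietoris map $\iota_+ - t^{-1}\iota_- \colon Q\otimes H_1(F) \to Q\otimes H_1(Y)$ is an isomorphism, which is what allows the various duality and Bockstein maps to be inverted.

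First I would make the identification, via the isomorphism $\eta$ of Section~\ref{section:twisted-homology}, of $C_*(Y;M)$ with $M \otimes_\Z C_*(Y_0)$ and similarly for $F$, so that the inclusion-induced maps on $\Lambda\otimes H_1(F)$ into $H_1(X;\Lambda)$ become the familiar $\iota_+$ and $t^{-1}\iota_-$ appearing in Mayer--Vietoris. Next, given $v \in \Lambda\otimes H_1(F)$, represent $\iota v$ by a cycle $z_v$ in $C_1(\wti X)$ supported near $F_0$. To apply Poincaré duality I would realise the image of $\iota v$ in $H_1(X,\partial X;\Lambda)$ by a relative $2$-cycle, and then pass to $H^2(X;\Lambda)$; concretely, the Poincaré dual of a $1$-cycle carried by the surface $F_0$ is represented by a $2$-cocycle that "counts intersection with $F_0$", which is where the surface $F$ enters as the geometric avatar of a cohomology class. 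The crucial computation is to lift this $2$-cocycle across the Bockstein $\BS \colon H^1(X;Q/\Lambda) \to H^2(X;\Lambda)$: one writes the $Q/\Lambda$-valued $1$-cocycle whose coboundary is the given $\Lambda$-valued $2$-cocycle, and here the element $(\iota_+ - t^{-1}\iota_-)^{-1}(\iota_+(v)) \in Q\otimes H_1(Y)$ appears as precisely the "fractional" $1$-chain in $Y$ needed to bound, because $\partial$ of a chain filling in across the fundamental domains telescopes into the operator $\iota_+ - t^{-1}\iota_-$. Finally, applying $\kappa$ and the evaluation map $\ev$ and pairing the resulting $Q/\Lambda$-valued cocycle against the cycle $z_w$ representing $\iota w$ reduces — after using the conventions on sesquilinearity and the involution — to the intersection number of $(\iota_+ - t^{-1}\iota_-)^{-1}(\iota_+(v))$ with $w$ in $F$, up to the sign $-1$ coming from the orientation conventions in Poincaré duality and the direction of the Bockstein.

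A few technical points I would want to handle carefully: (i) the surface $F$ may have boundary (toroidal $\partial X$ meets $F$ in curves), so I must work with the pair $(X,\partial X)$ throughout and check that all the chain-level representatives can be chosen compatibly with the boundary, using that $\partial X$ is a union of tori and the relevant relative homology behaves well; (ii) the identification of the Poincaré dual of a surface-carried cycle with an explicit cocycle requires choosing a handle decomposition or CW structure on $X$ adapted to $F \times I$ and $Y$, so that the dual cells of the co-orientation of $F$ give the cocycle directly; (iii) one must verify that the "fractional bounding chain" in $Y\otimes Q$ really does map under $\partial$ to the intended cocycle, which is the heart of the Bockstein calculation and amounts to a signed count over the copies $t^n Y_0$ in $\wti X$.

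\textbf{The main obstacle} I anticipate is precisely step (iii): getting the Bockstein computation to produce $(\iota_+ - t^{-1}\iota_-)^{-1}$ on the nose, with the correct power of $t$ (note the asymmetry $\iota_+$ versus $t^{-1}\iota_-$, and that it is $\iota_+(v)$ rather than $v$ that gets inverted) and the correct overall sign. This is delicate because it depends on the exact normalisation of the lift $Y_0$ relative to $F_0 \times \pm 1$, the chosen orientation conventions for Poincaré duality and the connecting map, and the conjugate-linearity built into the definition of $C^*(Y,Z;M) = \Hom_\Lambda(\ol{C_*(\wti Y,\wti Z)},M)$. I would expect to spend most of the proof pinning down these signs and $t$-powers by carefully following one generator through a small model (e.g. the knot case, or a single $1$-handle attached along $F$), and then asserting that the general case is formally identical. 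Everything else — Mayer--Vietoris, the identification $\eta$, the definitions of $\kappa$ and $\ev$ — is bookkeeping that the conventions section has already set up.
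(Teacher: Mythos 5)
Your plan is essentially the paper's own proof: the paper also computes $\Psi$ at the chain level using a CW structure adapted to $F\times I$ and $Y$, identifies the Poincar\'e dual of a surface-carried class with an explicit cochain supported on the product cells (Lemma~\ref{claim1}), inverts the Bockstein by choosing a splitting $d\colon Z_1(X;Q)\to C_2(X;Q)$ so that the resulting ``fractional'' bounding chain telescopes through Mayer--Vietoris into $(\iota_+-t^{-1}\iota_-)^{-1}(\iota_+(v))$ (Lemma~\ref{claim2} and the final diagram chase), exactly as you predict in your step~(iii). The only caveat is that what you have is a correct outline rather than a proof --- the sign and $t$-power bookkeeping you flag as the main obstacle is indeed where the paper spends most of its effort --- but the route is the same.
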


The proof of Theorem~\ref{thm:computebl} will require the remainder of this section.
In Section~\ref{proof-of-mainthm} we will deduce Theorem~\ref{mainthm} from Theorem~\ref{thm:computebl}.

\subsection{Proof of Theorem~\ref{thm:computebl}}\label{section:proof-of-theorem}
Throughout this section 
let $X$ be a  $3$-manifold  together with a class $\phi \in H^1(X;\Z)$ for which $H_1(X;Q)=0$,
and let $F$ be a dual surface corresponding to~$\phi$. We write $Y=X\sm F\times (-1,1)$.


In order to keep track of the various inclusion maps we will denote
any inclusion map $A\to B$ by $\iota(A)$. We consider the following commutative diagram of $\Lambda$-homomorphisms:

\[ \xymatrix@C0.5cm@R0.7cm{H_1(X;\Lambda)\ar[r]& H_1(X,\partial X;\Lambda)\ar[r]^-{\PD}& H^2(X;\Lambda)\ar[d]^\kappa\ar[rr]^-{\BS^{-1}}_-\cong&& H^1(X; Q/\Lambda)\ar[d]^-{\kappa}\\
\Lambda \otimes  H_1(F)\ar@/_2pc/[rr]_{=:\Upsilon}\ar[u]^{\iota(F)\circ \eta}&&
H_2\big(\ol{\hom_{\Lambda}(C_*(X;\Lambda),\Lambda)}\big)\ar@/_2pc/[rrdd]_-{=:\Omega}\ar[rr]^-{\BS^{-1}}_-{\cong}&& H_1\big(\ol{\hom_{\L}(C_*(X;\Lambda),Q/\Lambda)}\big)\ar[d]^{\ev}\\
&&&& \ol{\hom_{\Lambda}(H_1(X;\Lambda),Q/\Lambda)}\ar[d]^{(\iota(F) \circ \eta)^*}\\
&&&& \ol{\hom_\L(\Lambda \otimes H_1(F),Q/\Lambda).}}\]

Our first goal is to understand the maps $\Upsilon$ and $\Omega$.
We start out with $\Upsilon$.

Pick a CW-structure for $F$ and equip $F\times [-1,1]$ with the corresponding product CW-structure. We extend this to a CW-structure over $Y$ and thus over $X$.
We introduce the following notation and conventions.
\bn
\item Given a chain complex $C_*$ we denote  the cycles in $C_k$ by $Z_k(C_*):=\ker(\partial\colon C_k\to C_{k-1})$. We denote the projection map $Z_k(C_*)\to H_k(C_*)$ by~$p$.
\item Pick a splitting $b\colon C_1(F)\to Z_1(F)$ of the inclusion map $Z_1(F)\to C_1(F)$. We may do so because the short exact sequence $0 \to Z_1(F) \to C_1(F) \to B_0(F)\to 0$, where $B_0(F) = \im(C_1(F) \to C_0(F))$, splits, since $B_0(F)$ is a submodule of the free $\Z$-module $C_0(F)$ and hence is also free.
\item Identify $C_k(F\times I,F\times \pm 1)$ with the free $\Z$-module generated by the open product $k$-cells.
\item Let $\times I\colon C_*(F)\to C_{*+1}(F\times I,F\times \pm 1)$ be the chain isomorphism that is induced by mapping each $i$-dimensional cell in $F$ to the corresponding open $(i+1)$-dimensional product cell.
\item Let $\iota(F\times I)\colon C_k(\wti{F\times I},\wti{F\times \pm 1})\to C_k(\wti{X})$ be the inclusion map of $\Lambda$-modules.  That is, $\iota(F \times I)$ is the map which sends the open product cell in $\wti{F\times I}$ to the same cell in $\wti{X}$.
By a slight abuse of notation we also denote the induced map $C_k(F\times I,F\times \pm 1;\Lambda)\to C_k(X;\Lambda)$ of $\Lambda$-modules by $\iota(F\times I)$.
\item Let
\[ c\colon {C_2(\wti{X})}\to {C_2(\wti{F\times I},\wti{F\times \pm 1})}\]
 be the chain map of $\Lambda$-modules which is  the identity on the open product cells and which is the zero map on all other cells.
  By a slight abuse of notation we denote the induced map
 \[ c\colon {C_2(X;\Lambda)}\to C_2(F\times I,F\times \pm 1;\Lambda)\]
 also by $c$.
 Note that $c$ is a splitting of $\iota(F\times I)$.
\item Given a map $f\colon A\to B$ between $\Z$-modules we denote the induced maps  $\Lambda \otimes A\to \Lambda \otimes B$ and $Q \otimes A\to Q\otimes B$ by $f$ as well.
\en
Now let $w\in H_1(F)$.
We consider the following diagram of $\Lambda$-module homomorphisms.  Recall that the map $\eta$, defined at the end of Section~\ref{section:twisted-homology}, can be used for the chains of $F_0$ or $Y_0$.

\[ \xymatrix@R1.0cm@C1.1cm{
&&&&\Lambda\\
{\Lambda} \otimes H_1(F)\ar@/^1.3pc/[urrrr]^{v\mapsto -v\underset{F}{\cdot}w}
&{\Lambda} \otimes Z_1(F)\ar@{^{(}->}[r]\ar@{->>}[l]^-{p} & {\Lambda} \otimes C_1(F)\ar[r]^-{\times I}_-\cong  \ar@/_1pc/[l]_-b&
{\Lambda} \otimes C_2(F\times I,F\times \pm 1)\ar@{^{(}->}[r]_-{\iota(F\times I)\circ \eta}&
{C_2(X;\Lambda)}\ar@/_1.5pc/[l]_-{\eta^{-1}\circ c}\ar[u]^{=:\phi_w}.\\
 }\]
We have the following lemma.

\begin{lemma}\label{claim1}
The homomorphism $\phi_w\colon C_2(X;\Lambda)\to \Lambda$
  defined by the above sequence of homomorphisms represents $\Upsilon(1\otimes w)$.
\end{lemma}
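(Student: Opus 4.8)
The plan is to unwind the definition of $\Upsilon$ step by step and track the image of $1 \otimes w$ through each map in its defining composition. Recall that $\Upsilon$ is the composite $H^2(X;\Lambda) \circ \PD \circ \iota(F)\circ\eta$ followed by $\kappa$, i.e.\ $\Upsilon = \kappa \circ \PD \circ (\iota(F)\circ\eta)$ viewed as a map into $H_2(\ol{\hom_\Lambda(C_*(X;\Lambda),\Lambda)})$. So first I would push $1\otimes w$ into $H_1(X;\Lambda)$ via $\iota(F)\circ\eta$: choosing a cycle representative for $w$ in $Z_1(F)$, its image is a $1$-cycle in $C_1(X;\Lambda)$ supported on the lift $F_0$. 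The next task is to apply Poincar\'e duality, i.e.\ to find an explicit cochain-level representative of $\PD^{-1}$ applied to (the image of) this class in $H_1(X,\partial X;\Lambda)$; equivalently, a $2$-cocycle in $\ol{\hom_\Lambda(C_*(X;\Lambda),\Lambda)}$ whose cap product with the fundamental class recovers the homology class of $w$. This is where the product CW-structure on $F\times I$ and the splitting maps $b$, $\times I$, $c$, $\eta$ set up in the itemised list come in: the point is that capping a $2$-cochain with $[X]$ geometrically ``slides'' a dual surface cell across the $I$-direction, so the composite $\phi_w = (\text{the labelled map } v\mapsto -v\underset{F}{\cdot}w)\circ p \circ \eta^{-1}\circ c$ should be exactly a cochain-level lift of $\PD^{-1}(\iota(F)\eta(1\otimes w))$.

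The key steps, in order, are: (1) verify that $\phi_w$ as defined by the displayed zig-zag is a genuine $\Lambda$-cochain $C_2(X;\Lambda)\to\Lambda$, i.e.\ that it is well-defined (independent of the choice of cycle representative is not needed yet, but $\phi_w$ must be additive and $\Lambda$-linear --- this is immediate since every map in the chain is); (2) check that $\phi_w$ is a cocycle, i.e.\ $\phi_w\circ\partial_3 = 0$ on $C_3(X;\Lambda)$ --- here one uses that the only $3$-cells meeting the product region are the product cells of $F\times I$, that $\partial$ of such a product cell $\sigma\times I$ equals $(\partial\sigma)\times I \pm (\sigma\times 1 - \sigma\times (-1))$, and that the boundary caps $\sigma\times\pm 1$ are killed by $c$, while the intersection pairing vanishes on boundaries; (3) identify the cohomology class $[\phi_w]\in H^2(X;\Lambda)$ with $\PD^{-1}$ of the image of $1\otimes w$, by computing the cap product $[\phi_w]\frown [X]$ and checking it equals $[\iota(F)\eta(1\otimes w)]$ in $H_1(X,\partial X;\Lambda)$; and (4) observe that applying $\kappa$ (which only conjugates the output) to $[\phi_w]$ gives $\Upsilon(1\otimes w)$, so that $\phi_w$ represents $\Upsilon(1\otimes w)$ as claimed. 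Steps (1) and (2) are routine diagram/boundary bookkeeping; step (4) is immediate from the definition of $\kappa$.

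I expect the main obstacle to be step (3): pinning down the precise sign and the precise form of the Poincar\'e duality cap product at the chain level, compatibly with the chosen product CW-structure and the convention (stated in Section~\ref{section:31}) that $F_0\times(-1)$ lies in $t^{-1}Y_0$ rather than in $Y_0$. The delicate point is that the cap product with $[X]$ sends a product $2$-cell dual data to a $1$-chain obtained by ``collapsing the $I$-factor'', and one must check that the resulting $1$-cycle on $F_0$ is exactly $w$ (not $-w$, not $t^{\pm1}w$, not $\iota_\pm(w)$ shifted into $Y$); the sign $-v\underset{F}{\cdot}w$ appearing in the labelled arrow of the diagram is precisely the normalisation that makes this work, and verifying it requires comparing the orientation conventions on $F\times I$, on $X$, and on the intersection pairing $\underset{F}{\cdot}$. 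Concretely, I would reduce this to the statement that for cycles $v,w\in Z_1(F)$, evaluating the cochain ``$\times I$ followed by $\iota(F\times I)$ followed by the cap-with-$[X]$ pairing'' on the cell $v\times I$ returns $\pm(v\underset{F}{\cdot}w)$, which is a purely $2$-dimensional intersection-theory computation on the surface $F$ thickened by $I$ inside $X$. Everything else is formal manipulation of the maps already named in the itemised list preceding the lemma.
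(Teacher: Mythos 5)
Your overall plan has the right shape, and steps (1), (2) and (4) are fine: in particular your cocycle check in step (2) is correct ($3$-cells of $Y$ have boundaries supported away from the open product cells, so they die under $c$, while a product $3$-cell $\sigma\times I$ contributes $(\partial \sigma)\times I$ after applying $c$, which maps to $[\partial\sigma]=0$ in $H_1(F)$). The problem is step (3), which is the entire content of the lemma and which your proposal does not actually carry out; worse, the concrete reduction you offer would fail. You propose to identify $[\phi_w]\in H^2(X;\Lambda)$ by checking that the cochain evaluates on chains of the form $v\times I$ to $\pm\, v\underset{F}{\cdot}w$. But that evaluation holds by the very definition of $\phi_w$ and carries no information; and, more seriously, evaluation against $2$-cycles of $X$ cannot detect the class at all. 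Indeed $[\phi_w]$ is supposed to be $\PD^{-1}$ of the image of $w$ in $H_1(X,\partial X;\Lambda)$, which lies in the image of the $\Lambda$-torsion module $H_1(X;\Lambda)$, so $[\phi_w]$ is a torsion element of $H^2(X;\Lambda)$; since $\ol{\hom_{\Lambda}(H_2(X;\Lambda),\Lambda)}$ is torsion-free, the Kronecker evaluation of $[\phi_w]$ is identically zero. So you are forced to compute the cap product $[\phi_w]\cap [X,\partial X]$ itself, and doing that ``directly'' at the chain level for an arbitrary CW structure on $X$ requires a diagonal approximation that you have not set up and that your heuristic about ``sliding cells across the $I$-direction'' does not replace.

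The missing idea, which is how the paper proceeds, is naturality of the cap product: $\phi_w$ is by construction pulled back under the splitting $c$ from a cocycle on the pair $(F\times I, F\times \pm 1)$, so by \cite[Corollary~VI.8.4]{Br93} the computation of $[\phi_w]\cap[X,\partial X]$ localises to Poincar\'e--Lefschetz duality for $(F\times I,F\times\pm 1)$; the product CW structure then transports this, via $(\times I)^*$, to duality on the surface $F$ itself, where the statement that $\PD$ followed by Kronecker evaluation is the intersection form is standard. The sign you flag is then pinned down not by orienting cells but by graded commutativity of the cup product: for $a\in H_k$ and $b\in H_{n-k}$ one has $a\cdot_X b=(-1)^{k(n-k)}\ll \PD^{-1}(a),b\rr$, which for $k=1$, $n=2$ produces exactly the minus sign in $v\mapsto -v\underset{F}{\cdot}w$. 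Without the naturality step there is no bridge from your ``purely $2$-dimensional computation on $F$'' back to the class in $H^2(X;\Lambda)$, so as written the argument does not close.
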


\begin{proof}
Let $\beta\colon \Lambda\to \Lambda$ be the map given by $\beta(p)=\ol{p}$.
Now consider  the following diagram of maps of $\L$-modules.
 \[
 \xymatrix@C0.3cm@R0.7cm{
 &&&\Lambda\hspace{-0.1cm}\otimes\hspace{-0.1cm} \hom_\Z(H_1(F);\Z)\ar[r]^-{\beta \otimes \Id}& \ol{\Lambda}\hspace{-0.1cm}\otimes\hspace{-0.1cm}\hom_{\Z}(H_1(F),\Z)
 \ar@/^2pc/[d]^{(p\circ b)^*}
 \\
 \Lambda \hspace{-0.1cm}\otimes\hspace{-0.1cm} H_1(F)\ar[r]\ar[d]_\cong^{\iota}\ar@/^2pc/[urrr]^{w\mapsto (v\mapsto -v\underset{F}{\cdot}w)}&\Lambda\hspace{-0.1cm}\otimes\hspace{-0.1cm} H_1(F,\partial F)\ar[rr]_{\cong}^-{\Id\otimes - \PD}\ar[d]_\cong^{\iota}&&\Lambda \hspace{-0.1cm}\otimes\hspace{-0.1cm} H^1(F) \ar[r]_-\cong^-{\beta\otimes \Id}\ar[u]_{\ev}^\cong
 & \ol{\Lambda}\hspace{-0.1cm}\otimes\hspace{-0.1cm} H_1(\hom_{\Z}(C_*(F),\Z))\ar[u]^{\ev}_\cong
 \\
 \Lambda \hspace{-0.1cm}\otimes\hspace{-0.1cm} H_1(F\hspace{-0.09cm}\times \hspace{-0.09cm}I)\ar[r]\ar[d]_\cong^\eta&\Lambda \hspace{-0.1cm}\otimes\hspace{-0.1cm} H_1(F\hspace{-0.09cm}\times \hspace{-0.09cm}I,\partial F\hspace{-0.09cm}\times \hspace{-0.09cm}I)\ar[rr]_{\cong}^-{\hspace{-0.2cm}\Id\hspace{-0.05cm}\otimes\hspace{-0.05cm} \PD}\ar[d]_\cong^\eta&&\Lambda \hspace{-0.1cm}\otimes\hspace{-0.1cm} H^2(F\hspace{-0.09cm}\times \hspace{-0.09cm}I,F_\pm)\ar[u]^\cong_{(\times I)^*} \ar[d]_{\cong}^{\xi}
 \ar[r]^-{\hspace{-0.15cm}\beta\otimes \Id}_-{\hspace{-0.15cm}\cong}& \ol{\Lambda}\hspace{-0.1cm}\otimes\hspace{-0.1cm} H_2(\hom_{\Z}(C_*(F\hspace{-0.09cm}\times \hspace{-0.09cm} I,F_\pm),\Z)) \ar[d]_{\cong}^{\xi} \ar[u]^{\cong}_{(\times I)^{*}}
 \\
 H_1(F\hspace{-0.09cm}\times \hspace{-0.09cm}I;\Lambda)\ar[d]^{\iota(F\hspace{-0.02cm}\times \hspace{-0.02cm}I)}\ar[r] &H_1(F\hspace{-0.09cm}\times \hspace{-0.09cm} I,\partial F\hspace{-0.09cm}\times \hspace{-0.09cm} I;\Lambda)\ar[d]^{\iota(F\hspace{-0.02cm}\times \hspace{-0.02cm}I)}\ar[rr]_-\cong^-{\PD}&&H^2(F\hspace{-0.09cm}\times \hspace{-0.09cm} I,F_\pm;\Lambda)\ar[d]^{c^*} \ar[r]^-\kappa_-\cong&
 H_2\big(\ol{\hom_{\Lambda}(C_*(F\hspace{-0.09cm}\times \hspace{-0.09cm} I,F_\pm;\Lambda),\Lambda)}\big) \ar[d]^-{c^*}\\
 H_1(X;\Lambda)\ar[r]& H_1(X,\partial X;\Lambda)\ar[rr]_-\cong^-{\PD} && H^2(X;\Lambda)
 \ar[r]^-{\kappa}_-\cong& H_2\big(\ol{\hom_{\Lambda}(C_*(X;\Lambda),\Lambda)}\big).}
 \]
Of course we could have $\partial F = \emptyset=\partial X$.
We assert that the above diagram commutes.
\bn
\item The bottom middle square involving Poincar\'{e} duality commutes by Bredon \cite[Corollary~VI.8.4]{Br93}.
\item
We argue that the top middle square involving Poincar\'{e} duality commutes, and in particular that the given sign change on $-\op{PD}$ is correct. We start out with an observation. Let  $X$ be an oriented $n$-manifold with a decomposition of the boundary $X=A\cup B$ with $\partial A=A\cap B=\partial B$. Let $a\in H_k(X,A)$ and $b\in H_{n-k}(X,B)$.
We write $\alpha=\op{PD}(a)\in H^{n-k}(X,B)$ and $\beta=\op{PD}(b)\in H^k(X,A)$. Put differently, $a=\alpha\cap [X,\partial X]$ and $b=\beta\cap [X,\partial X]$.  By \cite[p.~367]{Br93}, by the graded commutativity of the cup product and by \cite[Section~VI.5.3]{Br93} we have
\[\hspace{0.9cm} \ba{rcl}a\underset{X}{\cdot} b&=&\op{PD}(\beta\cup \alpha)=(\beta\cup \alpha)\cap [X,\partial X]\\
&=&\ll \beta\cup \alpha,[X,\partial X]\rr =(-1)^{k(n-k)} \ll \alpha\cup \beta,[X,\partial X]\rr=
(-1)^{k(n-k)} \ll \alpha,\beta\cap [X,\partial X]\rr\\
&=&(-1)^{k(n-k)}\ll \alpha,b\rr.\ea \]
Here $\cdot_X$ denotes the intersection pairing on $X$ and $\ll-,-,\rr$ is the Kronecker evaluation map $H^i(X,Y)\times H_i(X,Y)\to \Z$.
Now  consider the  diagram
\[\hspace{0.9cm}
\xymatrix@C+1.8cm{ H_1(F,\partial F)\ar[d]^\iota\ar@/^2pc/[rr]^{a\mapsto (b\mapsto a\underset{F}{\cdot} b)}& H^1(F)\ar[l]^-\cong_-{-\alpha \cap [F,\partial F]\mapsfrom \alpha}
\ar[r]_-{\cong}^-{\alpha \mapsto (b\mapsto \ll \alpha,b\rr)}&\hom(H_1(F),\Z)\\
 H_1(F\times I,\partial F\times I)\ar@/_2pc/[rr]_{a\mapsto (b\mapsto a\underset{F\times I}{\cdot} b)}& H^2(F\times I,F_\pm)\ar[l]^-\cong_-{ \alpha \cap [F\times I,\partial (F \times I)]\mapsfrom \alpha}
\ar[r]_-{\cong}^-{\alpha \mapsto (b\mapsto \ll \alpha,b\rr)}\ar[u]^\cong_{(\times I)^*}&\hom(H_2(F\times I,F_\pm),\Z)\ar[u]^\cong_{(\times I)^*}.}\]
It follows from the above discussion that the two curved maps are the composition of the corresponding two horizontal maps. It is straightforward to see, using the geometric interpretation of the intersection pairing, that outside quadrilateral commutes. Evidently the right hand rectangle commutes. Since all maps are isomorphisms it follows that, as claimed, the left hand rectangle commutes.
\item It is straightforward to see that all other squares commute.
\item The  part of the diagram on top involving the big curved map commutes by the definition of the tensored up intersection pairing on $\Lambda\otimes H_1(F)$.
\en
Now the lemma follows from the fact that the diagram commutes and from the definitions. To see this, first observe that travelling top left to bottom left, then to bottom right, is the definition of $\Upsilon$.  On the other hand passing along $w\mapsto -\underset{F}{\cdot}w$, then down to the bottom right, gives the homomorphism $\phi_w$.
\end{proof}

Now we turn to the map $\Omega \colon H_2\big(\ol{\hom_{\Lambda}(C_*(X;\Lambda),\Lambda)}\big) \to \ol{\hom_\L(\Lambda \otimes H_1(F),Q/\Lambda)}$.
\bn
\item Since $H_1(F)$ is torsion-free we can pick a splitting $a\colon H_1(F)\to Z_1(F)$ of the projection map $Z_1(F)\to H_1(F)$.
\item
Since $H_1(X;Q)=0$ and since $Q$ is a field we can and will pick a splitting
$d\colon Z_1(X;Q)\to C_2(X;Q)$ of the boundary map $\partial \colon C_2(X;Q)\to Z_1(X;Q)$.
\item Given $\psi \in \hom_{\Lambda}(C_2(X;\Lambda),\Lambda)$ we denote the corresponding homomorphism $\hom_{Q}(C_2(X;Q),Q)$ by $\psi^Q$.
\en

\begin{lemma}\label{claim2}
The  homomorphism
\[ \ba{rcl} H_2\big(\ol{\hom_{\Lambda}(C_*(X;\Lambda),\Lambda)}\big) &\to & \ol{\hom_{\Lambda}(\Lambda \otimes  H_1(F),Q/\Lambda)} \\
\psi&\mapsto & \left(\ba{rcl} \Lambda \otimes  H_1(F)&\to &Q/\Lambda\\
p\otimes w &\mapsto &p\cdot \psi^Q((d\circ \iota_+\circ \eta\circ a)(w))\ea\right)\ea\]
of $\Lambda$-modules
is precisely the homomorphism $\Omega$.
\end{lemma}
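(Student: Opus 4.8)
The plan is to evaluate the composite $\Omega=(\iota(F)\circ\eta)^*\circ\ev\circ\BS^{-1}$ on a cocycle representative and to compare the outcome with the stated formula. So fix a cocycle $\psi\in\hom_\Lambda(C_2(X;\Lambda),\Lambda)$, i.e.\ $\psi\circ\partial_3=0$, representing a class in $H_2(\ol{\hom_\Lambda(C_*(X;\Lambda),\Lambda)})$, and an element $p\otimes w\in\Lambda\otimes H_1(F)$; since the involution functor is exact, the bars only relabel module structures, so I shall suppress them below and restore them at the end.

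The first step is to compute $\BS^{-1}[\psi]$ explicitly. By the definition of the connecting homomorphism, $\BS^{-1}[\psi]$ is represented by the reduction modulo $\Lambda$ of any $\Lambda$-linear $\tilde\phi\colon C_1(X;\Lambda)\to Q$ with $\tilde\phi\circ\partial_2=\psi$ (where the right-hand side is viewed as a map $C_2(X;\Lambda)\to Q$ which, \emph{a posteriori}, lands in $\Lambda$). I claim one may take for $\tilde\phi$ any $\Lambda$-linear extension of $\psi^Q\circ d\colon Z_1(X;Q)\to Q$ along $Z_1(X;Q)\hookrightarrow C_1(X;Q)$; such an extension exists because $Z_1(X;Q)$ is a $Q$-subspace of $C_1(X;Q)$ and $\hom_\Lambda(C_1(X;\Lambda),Q)=\hom_Q(C_1(X;Q),Q)$. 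To verify $\tilde\phi\circ\partial_2=\psi$: for $c\in C_2(X;\Lambda)$ one has $\partial_2 c\in B_1(X;Q)=Z_1(X;Q)$ since $H_1(X;Q)=0$, so $\tilde\phi(\partial_2 c)=\psi^Q(d(\partial_2 c))$; moreover $\partial_2(c-d(\partial_2 c))=0$ because $\partial_2\circ d=\Id$ on $Z_1(X;Q)$, so $c-d(\partial_2 c)\in Z_2(X;Q)=B_2(X;Q)$, the last equality holding because $H_2(X;Q)=0$ (part of the vanishing $H_*(X;Q)=0$ recorded in Section~\ref{section:31}); as $\psi^Q$ is a cocycle it kills $B_2(X;Q)$, whence $\psi^Q(d(\partial_2 c))=\psi^Q(c)=\psi(c)$, as required.

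The second step is to apply $\ev$ and $(\iota(F)\circ\eta)^*$. By definition $\ev(\BS^{-1}[\psi])$ is the $\Lambda$-homomorphism $H_1(X;\Lambda)\to Q/\Lambda$ sending the class of a cycle $z\in Z_1(X;\Lambda)$ to $\tilde\phi(z)\bmod\Lambda$, which equals $\psi^Q(d(z))\bmod\Lambda$ because $z\in Z_1(X;\Lambda)\subseteq Z_1(X;Q)$. Precomposing with $\iota(F)\circ\eta$ and using the splitting $a$ to represent $w$ by the cycle $a(w)\in Z_1(F)$, $\Lambda$-linearity of $\tilde\phi$ and of the evaluation lets the scalar $p$ out, giving $\Omega(\psi)(p\otimes w)=p\cdot\psi^Q\bigl(d\bigl((\iota(F)\circ\eta)(a(w))\bigr)\bigr)\bmod\Lambda$. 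Finally I would replace $\iota(F)$ by $\iota_+$: the core $F_0$ is isotopic inside $F_0\times[-1,1]\subseteq\wti X$ to $F_0\times 1\subseteq Y_0$, so $(\iota(F)\circ\eta)(a(w))$ and $(\iota_+\circ\eta)(a(w))$ differ by the $\Lambda$-boundary of the $2$-chain swept out by the isotopy; by the computation of the first step, $\psi^Q\circ d$ changes only by an element of $\psi(C_2(X;\Lambda))\subseteq\Lambda$ under such a boundary, so $\psi^Q(d(-))\bmod\Lambda$ is unaffected, and we land on $p\cdot\psi^Q\bigl((d\circ\iota_+\circ\eta\circ a)(w)\bigr)\bmod\Lambda$, which is precisely the asserted description of $\Omega$.

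I expect the argument to be entirely routine once organised this way. The one point genuinely requiring care is the identity $\tilde\phi\circ\partial_2=\psi$ in the first step, which is exactly where the hypotheses $H_1(X;Q)=0$ and $H_2(X;Q)=0$ are used; beyond that the work is bookkeeping — the identifications $\eta$, the reconciliation of the two maps $\iota(F)$ and $\iota_+$ sending $H_1(F)$ into $H_1(X;\Lambda)$, and tracking the conjugations, which by exactness of the involution amounts only to confirming that the surviving scalar appears as $p$ rather than $\ol p$, as it does because the target of $\ev$ is, on the level of maps, an honest (unconjugated) $\Lambda$-linear Hom-set.
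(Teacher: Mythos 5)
Your argument is correct and follows essentially the same route as the paper's: the paper encodes the identical computation---lifting $\psi$ through the Bockstein via $\psi^Q\circ d$ on cycles and then restricting along $\iota_+\circ\eta\circ a$---as a single commutative diagram whose commutativity it declares ``straightforward to verify.'' The only difference is that you spell out the points the paper leaves implicit, namely where $H_1(X;Q)=0$ and $H_2(X;Q)=0$ are used and the chain-level reconciliation of $\iota(F)$ with $\iota_+$.
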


\begin{proof}
Consider the following diagram of $\Lambda$-modules:
\[ \xymatrix@R0.85cm@C0.4cm{
&\ol{{\op{Hom}}_{\Lambda}\hspace{-0.05cm}(C_1(X;\Lambda),Q)}\ar[dd]^{\partial^*}\ar[r]\ar[dr]&\ol{{\op{Hom}}_{\Lambda}\hspace{-0.05cm}(C_1(X;\Lambda),Q/\Lambda)}\ar[dr]\\
&&\ol{{\op{Hom}}_{\Lambda}\hspace{-0.05cm}(Z_1(X;\Lambda),Q)}\ar[r]\ar[d]^=&\ol{{\op{Hom}}_{\Lambda}\hspace{-0.05cm}(Z_1(X;\Lambda),Q/\Lambda)}\ar[d]^{\iota_+^*}\\
\ol{{\op{Hom}}_{\Lambda}\hspace{-0.05cm}(C_2(X;\Lambda),\Lambda)}\ar[r]\ar[dr]_{\psi\mapsto \psi^Q}&\ol{{\op{Hom}}_{\Lambda}\hspace{-0.05cm}(C_2(X;\Lambda),Q)}\ar[d]^=&\ol{{\op{Hom}}_{Q}(Z_1(X;Q),Q)}\ar@/_0pc/[dl]_{\partial^*}& \ol{{\op{Hom}}_{\Lambda}\hspace{-0.05cm}(Z_1(F;\Lambda),Q/\Lambda)}\ar[d]^{\eta^*}_\cong\\
&\ol{{\op{Hom}}_{Q}(C_2(X;Q),Q)}\ar@/_1.5pc/[ur]_{d^*}&&\ol{{\op{Hom}}_{\Lambda}\hspace{-0.05cm}(\Lambda \hspace{-0.09cm}\otimes \hspace{-0.09cm}Z_1(F),Q/\Lambda)}\ar[d]^{a^*}\\
&&&\ol{{\op{Hom}}_{\Lambda}\hspace{-0.05cm}(\Lambda \hspace{-0.09cm}\otimes\hspace{-0.09cm} H_1(F),Q/\Lambda)}.
}\]
It is straightforward to verify that the diagram commutes. The map $\Omega$ is defined by the uppermost route.  The second step in this route involves choosing a lift to $\ol{\hom_{\Lambda}(C_1(X;\Lambda),Q)}$ of an element in $\ol{\hom_{\Lambda}(C_2(X;\Lambda),Q)}$.
On the other hand  $d\circ\iota_+\circ \eta\circ a$ is given by the sequence of maps in the lowermost route of the diagram.  Since the diagram commutes we get the same result.
This concludes the proof of the lemma.
\end{proof}

We are now ready to give the proof of Theorem~\ref{thm:computebl}.



\begin{proof}[Proof of Theorem~\ref{thm:computebl}]

Let $w\in \Lambda\otimes H_1(F)$. We define $\phi_w\colon C_2(X;\Lambda)\to \Lambda$ as in Lemma~\ref{claim1}.
Consider the following diagram of $\Lambda$-module homomorphisms
\[ \xymatrix@R0.8cm@C1.0cm{
&&&&Q/\Lambda\\
&&&&Q\ar@{->>}[u]\\
{Q} \otimes H_1(F)\ar@/^1pc/[rrrru]^{v\mapsto -v\underset{F}{\cdot}w}
\ar@/_0pc/[d]_{\iota_+-t^{-1}\iota_-}^\cong&{Q} \otimes Z_1(F)\ar@{^{(}->}[r]\ar@{->>}[l]^p \ar[d]^{\iota_+-t^{-1}\iota_-}& {Q}  \otimes C_1(F)\ar[r]^-{\times I}_-\cong  \ar@/_1pc/[l]_-b&
{Q}  \otimes C_2(F\times I,F\times \pm 1)\ar@{^{(}->}[r]_-{\iota(F\times I)\circ \eta}&
C_2(X;{Q} )\ar@{->>}[d]_\partial\ar@/_1pc/[l]_-{\eta^{-1}\circ c}\ar[u]^{\phi_w^Q}\\
 {Q}  \otimes H_1(Y)& {Q}  \otimes Z_1(Y)\ar@{->>}[l]^p\ar@{^{(}->}[rrr]^{\iota(Y)\circ \eta} &&&Z_1(X;{Q})\ar@/_1pc/[u]_d\\
{Q}  \otimes H_1(F) \ar[u]^{\iota_+} \ar@{_{(}->}@/_1pc/[r]_{a}& \ar@{->>}[l]_p {Q} \otimes Z_1(F)\ar@{^{(}->}[u]^{\iota_+}&&&\\
{\Lambda}  \otimes H_1(F) \ar@{^{(}->}[u] \ar@{_{(}->}@/_1pc/[r]_{a}& \ar@{->>}[l]_p {\Lambda} \otimes Z_1(F)\ar@{^{(}->}[u]\ar@{^{(}->}[uurrr]_{\iota_+\circ \eta}
 }\]
Here the top left vertical map $\iota_+-t^{-1}\iota_-$ arises from the conventions of Section~\ref{section:31} that $F_0\times 1$ lies in $Y_0$ and  $F_0\times -1$ lies in $t^{-1}Y_0$.
It follows easily from the definitions that the homomorphism $C_2(X;Q)\to Q$ on the top right is indeed the homomorphism $\phi_w^Q$ given by tensoring up $\phi_w\colon C_2(X;\Lambda)\to \Lambda$ with~$Q$.
Here the left squares and the right rectangle with the horizontal arrows are easily seen to commute, with any choice of maps indicated in the squares and rectangles. The top triangle commutes by definition of $\phi_w^Q$. It is straightforward to see that the bottom  triangle commutes.

The theorem will follow from  the two lemmas above and the commutativity of the diagram, as we now explain.
Let $v\in \Lambda \otimes H_1(F)$. We write
\[ z=(b\circ (\times I)^{-1}\circ\eta^{-1}\circ c\circ d\circ \iota_+ \circ \eta \circ a)(v)\in Q \otimes Z_1(F).\]
That is, $z\in Q \otimes Z_1(F)$ is the element obtained from $w$ via the long route on the right. It follows from the previous two lemmas that:
\[ \ba{rcl} \op{Bl}(\iota v,\iota w)&=&\Omega(\Upsilon(w))(v)\\
&=& \Omega(\phi_w)(v) \\
&=&(\phi_w^Q\circ d\circ \iota_+\circ\eta\circ a)(v)\\
&=&p(z)\underset{F}{\cdot} w\ea \]
More precisely, the first equality follows from the diagram at the beginning of Section~\ref{section:proof-of-theorem}.  The second equality follows from Lemma~\ref{claim1}, and the third equality follows from Lemma~\ref{claim2}.  The fourth and final equality follows from the definition of $z$ and the commutativity of the top triangle.  This top triangle is the triangle that appeared above Lemma~\ref{claim1}, tensored with $Q$ over $\L$, and it commutes by definition of the right hand map.

 We make the following observations to finish the proof.
\bn
\item Since the big rectangle  commutes and since $Q \otimes Z_1(Y)\to Z_1(X;Q)$ is a monomorphism we see that $(\iota_+-t^{-1}\iota_-)(z)=(\iota_+\circ a)(v)$.
\item Since the bottom two squares and the bottom triangle commute we see that the image of $p((\iota_+-t^{-1}\iota_-)(z))=\iota_+(v)$.
\item Since the top left square commutes and since $\iota_+-t^{-1}\iota_-\colon Q \otimes H_1(F)\to Q \otimes H_1(Y)$ is an isomorphism, we see that $p(z)=(\iota_+-t^{-1}\iota_-)^{-1}(\iota_+)(v)$.
\en
Combining these observations with the equation $\op{Bl}(\iota v,\iota w) = p(z)\underset{F}{\cdot} w$ from above, we obtain that
\[ \op{Bl}(\iota v,\iota w)=-p(z)\underset{F}{\cdot} w=-(\iota_+-t^{-1}\iota_-)^{-1}(\iota_+)(v)\underset{F}{\cdot} w.\]
This concludes the proof of Theorem~\ref{thm:computebl}.

\end{proof}

\section{Proof of Theorem~\ref{mainthm}}
\label{proof-of-mainthm}

In this section we will prove Theorem~\ref{thm:blanchfield-k-intro}, which implies Theorem~\ref{mainthm}.
Throughout this section let $K$ be an oriented knot in $S^3$, let $X_K := S^3 \sm \nu K$ be the knot exterior, and we will apply Theorem~\ref{thm:computebl} with $\phi \in H^1(X_K;\Z) = \Hom(\pi_1(X_K),\Z)$ the epimorphism corresponding to the orientation.  Denote the Blanchfield pairing of~$K$ by~$\Bl_K$.

Given a Seifert surface $F$ for $K$ and a basis
$\{d_1,\dots,d_{2g}\}$  for $H_1(F;\Z)$ we write
$a_{ij}=\op{lk}(d_i,d_j^+)$ and we refer to $A=(a_{ij})$ as a Seifert matrix for $K$. Here we follow the convention of  \cite[p.~201]{Ro90}\cite[p~.53]{Li97}, which is the transpose of the convention used in~\cite{Ke75} and \cite{Le77}. For the convenience of the reader we restate Theorem~\ref{thm:blanchfield-k-intro} here.

\begin{theorem}\label{thm:blanchfield-k}
Let  $K\subset S^3$ be an oriented knot.
Let $F$ be a Seifert surface for the knot $K$ and let  $\{d_1,\dots,d_{2g}\}$ be a basis for $H_1(F;\Z)$.
Denote the Seifert matrix of $K$ corresponding to the basis $\{d_1,\dots,d_{2g}\}$  by $A$ and
 write  $Y:= X \sm (F \times(-1,1))$.  Let $\{e_1,\dots,e_{2g}\}$ be the basis of $H_1(Y;\Z)$ that is  Alexander dual to the basis $\{d_1,\dots,d_{2g}\}$, that is $\op{lk}(d_i,e_j)=\delta_{ij}$.
Then the homomorphism
\[ \ba{rcl} \Phi\colon \Lambda^{2g}&\to& H_1(X_K;\Lambda)\\
  (p_1,\dots,p_{2g})&\mapsto & \sum\limits_{i=1}^{2g}p_ie_i \ea\]
induces an isomorphism
\[  \Phi\colon \Lambda^{2g}/(A-t^{-1}A^T)\Lambda^{2g}\xrightarrow{\cong} H_1(X_K;\Lambda)\]
which in turn gives rise to a commutative diagram
\[ \xymatrix@C4cm{\Lambda^{2g}/(A-t^{-1}A^T)\Lambda^{2g}\times \Lambda^{2g}/(A-t^{-1}A^T)\Lambda^{2g}\ar[r]^-{(v,w)\mapsto v^T(t-1)(A - tA^T)^{-1}\ol{w}}\ar[d]^{\Phi\times \Phi}&Q/\Lambda\ar[d]^=\\
H_1(X_K;\Lambda)\times H_1(X_K;\Lambda)\ar[r]^-{(v,w)\mapsto \Bl_K(v,w)}&Q/\Lambda.}\]
\end{theorem}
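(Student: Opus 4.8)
The strategy is to reduce everything to an application of Theorem~\ref{thm:computebl}, taking $X = X_K$ and $F$ the Seifert surface dual to $\phi$. First I would recall the standard picture of the infinite cyclic cover of a knot exterior cut along a Seifert surface: $Y = X_K \setminus (F \times (-1,1))$ is a compact $3$-manifold, and by Alexander duality $H_1(Y;\Z) \cong \Z^{2g}$ with a basis $\{e_1,\dots,e_{2g}\}$ dual to $\{d_1,\dots,d_{2g}\}$ via $\op{lk}(d_i,e_j)=\delta_{ij}$. The key computational input, which I would establish first, is that in terms of these bases the inclusion-induced maps are
\[ \iota_+(d_j) = \sum_i \op{lk}(d_j^+, e_i)\, e_i, \qquad \iota_-(d_j) = \sum_i \op{lk}(d_j^-, e_i)\, e_i, \]
and that $\op{lk}(d_j^+,e_i) = a_{ij}$ while $\op{lk}(d_j^-,e_i) = \op{lk}(d_j^+, e_i) + (d_j \cdot d_i)_F = a_{ij} + J_{ij}$ where $J = A - A^T$ is the intersection form (using $\det(A-A^T) = 1$ for a knot). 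Equivalently, writing matrices columnwise, $\iota_+ = A^T$ and $\iota_- = A^T + J = A^T + A - A^T = A$, so $\iota_+$ is represented by $A^T$ and $\iota_-$ by $A$. I must be careful with the transpose conventions fixed in the section (following \cite{Ro90},\cite{Li97}); the point is that with $a_{ij} = \op{lk}(d_i, d_j^+)$ one gets $\iota_+ \leftrightarrow A^T$, $\iota_- \leftrightarrow A$, acting on column vectors.

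With this identification, $\iota_+ - t^{-1}\iota_-$ is the map $\Z^{2g} \to \Z^{2g}$ given (after tensoring up) by the matrix $A^T - t^{-1}A$, i.e. $-t^{-1}(A - tA^T)$. Hence the Mayer--Vietoris presentation gives $H_1(X_K;\Lambda) \cong \coker(\iota_+ - t^{-1}\iota_-) \cong \Lambda^{2g}/(A - t^{-1}A^T)\Lambda^{2g}$, and the map sending the standard basis vectors to $e_1,\dots,e_{2g}$ is exactly $\Phi$ composed with $\iota_+$ on generators; I would check that $\Phi$ descends to the stated isomorphism by comparing the Mayer--Vietoris sequence with the presentation, noting $\iota \circ (\text{basis of } F) $ maps onto generators and that the relation submodule is the image of $\iota_+ - t^{-1}\iota_-$. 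Note $(A - t^{-1}A^T)$ and $(tA - A^T)$ generate the same submodule since they differ by the unit $-t$, which reconciles the two forms of the statement.

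Now I apply Theorem~\ref{thm:computebl}: for $v, w \in \Lambda^{2g} = \Lambda \otimes H_1(F)$,
\[ \Bl_K(\iota v, \iota w) = -\big((\iota_+ - t^{-1}\iota_-)^{-1}(\iota_+ v)\big) \underset{F}{\cdot} w. \]
Substituting $\iota_+ = A^T$, $\iota_+ - t^{-1}\iota_- = A^T - t^{-1}A$, and the intersection form $J = A - A^T$ (represented, as a sesquilinear pairing on column vectors, by $(x,y) \mapsto x^T J^T \bar y$ or $x^T(A^T-A)\bar y$ — here I must pin down the sign/transpose of $\underset{F}{\cdot}$ carefully), this becomes a matrix identity. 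Concretely,
\[ \Bl_K(\Phi v, \Phi w) \;=\; -\, (A^T - t^{-1}A)^{-1}(A^T v) \;\underset{F}{\cdot}\; w \;=\; v^T \big(\text{some matrix}\big) \overline{w}, \]
and the remaining task is the linear-algebra simplification showing that "some matrix" equals $(t-1)(A - tA^T)^{-1}$. This uses $(A^T - t^{-1}A)^{-1} = -t(A - tA^T)^{-1}$ and the identity $A^T(A-tA^T)^{-1} - \text{(intersection term)} = \cdots$; the relation $(A - tA^T) - (A^T - tA) \cdot(\text{stuff})$ or more simply $A - A^T = J$ together with $-tA^T + tA = tJ$ lets one combine the two contributions. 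I expect this final algebraic manipulation — getting every transpose, conjugation ($t \leftrightarrow t^{-1}$), and sign exactly right so that the answer lands on $v^T(t-1)(A - tA^T)^{-1}\overline{w}$ rather than some cosmetically different but incorrect variant — to be the main obstacle, precisely because (as the introduction stresses) the discrepancy between $tA - A^T$ in the module and $A - tA^T$ in the pairing is genuine and unforgiving. I would double-check the outcome against well-definedness: the pairing must satisfy $(A - t^{-1}A^T)^T (t-1)(A-tA^T)^{-1} \in \Lambda$-valued on the nose, i.e. $(A^T - t^{-1}A)(t-1)(A-tA^T)^{-1}$ should have entries in $\Lambda$, which it does since $(A^T - t^{-1}A) = -t^{-1}(A - tA^T) + t^{-1}(A - A^T)\cdot$(correction) — this consistency check both validates the computation and matches the "entertaining exercise" the authors recommend. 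Finally, to complete the proof of Theorem~\ref{thm:blanchfield-k} as stated with $(tA - A^T)$, I rescale the presentation by the unit $-t$ as noted above, which leaves the cokernel and the pairing formula unchanged, giving the diagram in the theorem.
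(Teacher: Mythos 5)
Your overall strategy is the same as the paper's: identify $\iota_\pm$ with Seifert matrices via the Alexander dual basis, read off the presentation of $H_1(X_K;\Lambda)$ from Mayer--Vietoris, and feed the matrices into Theorem~\ref{thm:computebl}. However, there are two genuine problems.

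First, your key computational input is transposed. With $a_{ij}=\op{lk}(d_i,d_j^+)$ and $\op{lk}(d_i,e_j)=\delta_{ij}$, a class $c\in H_1(Y;\Z)$ expands as $c=\sum_i \op{lk}(d_i,c)\,e_i$, so $\iota_+(d_j)=[d_j^+]=\sum_i a_{ij}e_i$ and $\iota_-(d_j)=\sum_i \op{lk}(d_i,d_j^-)e_i=\sum_i a_{ji}e_i$; that is, on column vectors $\iota_+$ is represented by $A$ and $\iota_-$ by $A^T$, the opposite of what you assert. (Your formula $\iota_+(d_j)=\sum_i\op{lk}(d_j^+,e_i)e_i$ is not the Alexander-duality expansion; the coefficients are $\op{lk}(d_i,\cdot)$, not $\op{lk}(\cdot,e_i)$.) Carried through consistently, your convention produces the presentation matrix $A^T-t^{-1}A$ (i.e.\ $A-tA^T$ up to a unit) and the pairing $v^T(t-1)(tA-A^T)^{-1}\ol{w}$ --- the statement with $A$ and $A^T$ interchanged. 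That transposed statement happens to be well-defined and isometric to the correct one, but it is not the theorem as stated with the specific basis $\{e_i\}$ and map $\Phi$, and since the entire point of this theorem (as the introduction's discussion of Kearton's formula makes clear) is to pin down exactly which of $tA-A^T$ and $A-tA^T$ appears where, this is not a cosmetic slip.

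Second, and more substantively, there is a structural gap. Theorem~\ref{thm:computebl} computes $\Bl_K(\iota v,\iota w)$ only for classes coming from $H_1(F)$, i.e.\ for $\Bl_K(\Phi(Av),\Phi(Aw))$; but $\Phi$ is defined on all of $\Lambda^{2g}$ via the $e_i$, and $A$ is in general not invertible over $\Lambda$, so the image of $\iota_+$ is a proper $\Lambda$-submodule of $\Lambda\otimes H_1(Y;\Z)$. You pass silently from ``$\Bl_K(\iota v,\iota w)$'' to ``$\Bl_K(\Phi v,\Phi w)$'', which conflates the two and loses an essential factor of $A$. The paper bridges this with Lemma~\ref{lemma:iota-+-surjective}: since $\iota_+-\iota_-$ is represented by the unimodular matrix $A-A^T$, every element of $\coker(\iota_+-t^{-1}\iota_-)$ is hit by $\iota_+$, so any $\wt v\in\Lambda^{2g}$ can be written as $Av+(A-t^{-1}A^T)x$; one then checks by a direct manipulation in $Q/\Lambda$ that the stated pairing formula is unchanged by adding elements of the relation submodule. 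Without this surjectivity argument and the accompanying independence-of-lift computation, you have only verified the formula on the submodule $\Phi(A\Lambda^{2g})$, not on all of $H_1(X_K;\Lambda)$.
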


We remark that the presentation matrix used for the Alexander module is slightly different from the matrix given in the introduction, but the two matrices only differ by multiplication by a unit $t^{\pm 1}$.

\begin{proof}
By definition of the Seifert form, with respect to the bases $\{d_1,\dots,d_{2g}\}$ and $\{e_1,\dots,e_{2g}\}$,
the two inclusion induced maps $\iota_-\colon H_1(F;\Z)\to H_1(F\times -1;\Z) \to H_1(Y;\Z)$ and $\iota_+ \colon H_1(F;\Z)\to H_1(F\times 1;\Z) \to H_1(Y;\Z)$ are represented  by $A^T$ and $A$ respectively.
Let
\[ \ba{rcl} \Theta\colon \Lambda^{2g}&\to& \Lambda \otimes H_1(F;\Z)\\
  (p_1,\dots,p_{2g})&\mapsto & \sum\limits_{i=1}^{2g}p_id_i \ea\mbox{ and }
  \ba{rcl} \wt{\Phi}\colon \Lambda^{2g}&\to& \Lambda \otimes H_1(Y;\Z)\\
    (p_1,\dots,p_{2g})&\mapsto & \sum\limits_{i=1}^{2g}p_ie_i \ea\]
be the isomorphisms induced by the choices of bases.
Now we note that the Mayer--Vietoris sequence and the above maps yield the following commutative diagram of short exact sequences:
\[ \xymatrix@C0.8cm{0\ar[r]& \Lambda^{2g}\ar[d]^\Theta\ar[rr]^{A-t^{-1}A^T}&& \Lambda^{2g}\ar[d]^{\wt{\Phi}}\ar[r] & \Lambda^{2g}/(A^T - tA)\Lambda^{2g}\ar[d]^{\Phi}\ar[r]&0\\
 0\ar[r]& \Lambda \otimes H_1(F;\Z)\ar[rr]^{\iota_+-t^{-1}\iota_-}&& \Lambda \otimes H_1(Y;\Z)\ar[r]& H_1(X_K;\Lambda)\ar[r] &0.}\]
This concludes the proof of the (well known) first statement of Theorem~\ref{thm:blanchfield-k}. Before we continue with the discussion of the Blanchfield pairing we need to prove the following lemma.

\begin{lemma}\label{lemma:iota-+-surjective}
The map
\[ \iota_+\colon \Lambda \otimes H_1(F;\Z) \to
\coker\big((\iota_+-t^{-1}\iota_-)\colon \Lambda \otimes H_1(F;\Z)\to
\Lambda \otimes H_1(Y;\Z)\big) \]
is surjective.
\end{lemma}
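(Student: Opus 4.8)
The plan is to reduce the assertion to the unimodularity of the intersection form of the Seifert surface, and then to transport the resulting surjectivity across the cokernel using the relation $\iota_+\equiv t^{-1}\iota_-$ that holds there.

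First I would use that, with respect to the bases $\{d_1,\dots,d_{2g}\}$ of $H_1(F;\Z)$ and $\{e_1,\dots,e_{2g}\}$ of $H_1(Y;\Z)$ fixed above, the $\Z$-linear difference $\iota_+-\iota_-\colon H_1(F;\Z)\to H_1(Y;\Z)$ is represented by $A-A^T$. Since $F$ is a Seifert surface its boundary is a single circle, so $H_1(F;\Z)\to H_1(F,\partial F;\Z)$ is an isomorphism and the intersection pairing on $H_1(F;\Z)$ is unimodular; equivalently $\det(A-A^T)=\pm1$. Hence $\iota_+-\iota_-\colon H_1(F;\Z)\to H_1(Y;\Z)$ is an isomorphism, and therefore so is its $\L$-linear extension $\L\otimes H_1(F;\Z)\to\L\otimes H_1(Y;\Z)$. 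As the image of this extension is contained in $\im\iota_++\im\iota_-$, we obtain
\[ \im\iota_++\im\iota_-=\L\otimes H_1(Y;\Z),\]
where now $\iota_\pm\colon\L\otimes H_1(F;\Z)\to\L\otimes H_1(Y;\Z)$ denote the $\L$-linear maps.

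Next, let $q\colon\L\otimes H_1(Y;\Z)\to C:=\coker\big(\iota_+-t^{-1}\iota_-\big)$ be the projection, and set $W:=\im(q\circ\iota_+)$, a $\L$-submodule of $C$; the lemma is precisely the statement $W=C$. For any $x\in\L\otimes H_1(F;\Z)$ the element $(\iota_+-t^{-1}\iota_-)(x)$ lies in $\ker q$, so $q\iota_+(x)=t^{-1}q\iota_-(x)$, that is $q\iota_-(x)=q\iota_+(tx)$. Since multiplication by $t$ is an automorphism of $\L\otimes H_1(F;\Z)$, this shows $q(\im\iota_-)=W$. Combining this with the displayed equality above,
\[ C=q\big(\L\otimes H_1(Y;\Z)\big)=q(\im\iota_+)+q(\im\iota_-)=W+W=W,\]
as desired.

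I do not expect a serious obstacle; the only real content is the identification of $\iota_+-\iota_-$ with the unimodular intersection form of $F$, which is classical and already implicit in the computation carried out just before the lemma. It is worth noting that only the surjectivity of $\iota_+-\iota_-$ is used, so neither the hypothesis $H_1(X_K;Q)=0$ nor the exactness of the Mayer--Vietoris sequence enters here; alternatively one could phrase everything through the short exact sequence $0\to\L\otimes H_1(F)\to\L\otimes H_1(Y)\to H_1(X_K;\L)\to 0$ already established, which identifies $C$ with the Alexander module and the composite $q\circ\iota_+$ with the map $\iota$.
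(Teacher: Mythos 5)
Your proposal is correct and follows essentially the same route as the paper: both arguments rest on the observation that $\iota_+-\iota_-$ is represented by the unimodular matrix $A-A^T$, so that $\im\iota_+$ and $\im\iota_-$ together generate $\Lambda\otimes H_1(Y;\Z)$, and then use the relation $\iota_-\equiv t\,\iota_+$ modulo $\im(\iota_+-t^{-1}\iota_-)$ to absorb the $\iota_-$ contribution. The only difference is presentational — you phrase the last step via the submodule $W=\im(q\circ\iota_+)$, while the paper writes out the explicit identity $w=\iota_+(u+tv)+(\iota_+-t^{-1}\iota_-)(-tv)$.
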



\begin{proof}
Since the map is linear over $\Lambda$ it suffices to show that any element on the right hand side that is represented by an element in $H_1(Y;\Z)$ lies in the image.
So let $w\in H_1(Y;\Z)$.
First note that $\iota_+-\iota_-\colon H_1(F;\Z)\to H_1(Y;\Z)$ is an isomorphism since it can be represented by the matrix $A-A^T$ which has determinant $\pm 1$. Thus we see that the images of $\iota_+$ and $\iota_-$ generate $H_1(Y;\Z)$. In particular we can find $u,v\in H_1(F;\Z)$ such that $w=\iota_+(u)+\iota_-(v)$. It follows that
\[ w = \iota_+(u)+\iota_-(v)=
\iota_+(u+tv)+(\iota_+-t^{-1}\iota_-)(-tv).\]
This concludes the proof of the lemma.
\end{proof}

Now we turn to the discussion of the Blanchfield pairing. First note that by \cite[Chapter~8]{Ro90} we have the following commutative diagram
\[ \xymatrix@C3cm{ \Lambda^{2g}\times \Lambda^{2g} \ar[r]^-{(v,w)\mapsto v^T(A-A^T)\ol{w}}\ar[d]^{\Theta\times \Theta}&\Lambda\ar[d]^=\\
\Lambda \otimes H_1(F;\Z)\times \Lambda \otimes H_1(F;\Z)\ar[r]^-{(v,w)\mapsto v\cdot_F w}&\Lambda.}\]
Thus by Theorem~\ref{thm:computebl}, given any $v,w\in \Lambda^{2g}$, we have
\be \label{equ:computebl}\Bl_K(\iota_+(\Theta(v)),\iota_+(\Theta(w))) =- \big((A-t^{-1}A^T)^{-1}Av\big)^T  (A-A^T)\ol{w}=-
(Av)^T(A^T-t^{-1}A)^{-1}(A-A^T)\ol{w}.\ee
Now we turn to the proof that the diagram in Theorem~\ref{thm:blanchfield-k} is in fact commutative. So let $\wt{v},\wt{w}\in \Lambda^{2g}$.
First we consider the special case that $\wt{\Phi}(\wt{v})$ and $\wt{\Phi}(\wt{w})$ lie in the image of $\iota_+\colon \Lambda \otimes H_1(F;\Z)\to \Lambda \otimes H_1(Y;\Z)$. This means  there exist $v,w\in \Lambda^{2g}$ such that
$$\iota_+(\Theta(v))=\wt{\Phi}(\wt{v}) \text{ and } \iota_+(\Theta(w))=\wt{\Phi}(\wt{w}).$$
The commutative diagram
\[ \xymatrix{ \Lambda \otimes H_1(F;\Z)\ar[r]^{\iota_+}& \Lambda \otimes H_1(Y;\Z)\\
\Lambda^{2g}\ar[u]^{\Theta}_\cong \ar[r]^{A} & \Lambda^{2g}\ar[u]^{\wt{\Phi}}_\cong}\]
implies that $Av=\wt{v}$ and $Aw=\wt{w}$.
By (\ref{equ:computebl}) we have
\[ \ba{rcl} \Bl_K(\wt{\Phi}(\wt{v}),\wt{\Phi}(\wt{w}))&=&-\Bl_K(\iota_+(\Theta(v)),\iota_+(\Theta(w)))\\[0.05cm]
 &=&-(Av)^T(A^T-t^{-1}A)^{-1}(A-A^T) \ol{w}\\[0.05cm]
  &=&-(Av)^T(A^T-t^{-1}A)^{-1}((-A^T+t^{-1}A)+(A-t^{-1}A)) \ol{w}\\[0.05cm]
   &=&-(Av)^T(A^T-t^{-1}A)^{-1}(A-t^{-1}A) \ol{w}\\[0.05cm]
    &=&-(Av)^T(A^T-t^{-1}A)^{-1}(1-t^{-1}) \ol{Aw}\\[0.05cm]
  &=&-\wt{v}^T(A^T-t^{-1}A)^{-1}(1-t^{-1}) \ol{\wt{w}}\\[0.05cm]
   &=&\wt{v}^T(A-tA^T)^{-1}(t-1) \ol{\wt{w}}\in Q/\Lambda.\ea\]
In the step from the third to the fourth line we used that we are only interested in equality modulo~$\Lambda$.

Now we turn to the general case. So let $\wt{v},\wt{w}\in \Lambda^{2g}$.
By Lemma~\ref{lemma:iota-+-surjective}, there exist $v,w, x, y\in \Lambda^{2g}$ such that
$$\wt{\Phi}(\wt{v}) = \iota_+(\Theta(v)) + (\iota_+ - t^{-1} \iota_-)(\Theta(x)) \text{ and } \wt{\Phi}(\wt{w}) = \iota_+(\Theta(w)) + (\iota_+ - t^{-1} \iota_-)(\Theta(y)).$$
The two commutative diagrams for the price of one
\[ \xymatrix @C+1cm{ \Lambda \otimes H_1(F;\Z)\ar[r]^{\{\iota_+,\iota_+-t^{-1}\iota_-\}}& \Lambda \otimes H_1(Y;\Z)\\
\Lambda^{2g}\ar[u]^{\Theta}_\cong \ar[r]^{\{A,A-t^{-1}A^T\}} & \Lambda^{2g}\ar[u]^{\wt{\Phi}}_\cong}\]
imply that $\wt{v} = Av + (A-t^{-1}A^T)x$ and $\wt{w} =Aw +(A-t^{-1}A^T)y$.
Now observe that
\[
\ba{rcl}
  \wt{v}^T(A - tA^T)^{-1}(t-1)\ol{\wt{w}}
  &=& \big(Av + (A-t^{-1}A^T)x\big)^T (A - tA^T)^{-1}(t-1)\big(\ol{Aw +(A-t^{-1}A^T)y}\big)\\[0.05cm]
  &=& \big((Av)^T + x^T(A^T-t^{-1}A)\big)(A - tA^T)^{-1}(t-1)\big(\ol{Aw} +(A-tA^T)\ol{y}\big)\\[0.05cm]
  &=& \big((Av)^T -t^{-1} x^T(A-tA^T)\big)(A -tA^T)^{-1}(t-1)\big(\ol{Aw} +(A-tA^T)\ol{y}\big)\\[0.05cm]
  &=&(Av)^T(A - tA^T)^{-1}(t-1) \ol{A w},\ea\]
where the last equality holds in $Q/\Lambda$ only.
Combine this computation with the computation above to obtain $\Bl_K(\wt{\Phi}(\wt{v}),\wt{\Phi}(\wt{w})) = \wt{v}^T(A - tA^T)^{-1}(t-1)\ol{\wt{w}}$
as desired.
\end{proof}

\section{Blanchfield pairings of fibred 3-manifolds}\label{section:Bl-form-fibred-manifold}

In this section we apply Theorem~\ref{thm:computebl-intro} to give the proof of Corollary~\ref{cor:fib} from the introduction, that computes the Blanchfield pairing of fibred 3-manifolds.  We restate the corollary here for the convenience of the reader.

\begin{corollary}\label{cor:fib-text}
Let $F$ be a  surface and let $\varphi\colon F\to F$ be a self-homeomorphism.
Denote the mapping torus by $M=M(F,\varphi)$ and denote the canonical epimorphism by $\phi \colon \pi_1(M) \to \Z$.  Pick a basis $c_1,\dots,c_k$ for $H_1(F;\Z)$. With respect to this basis, let $J$ be the matrix representing the intersection pairing on $H_1(F;\Z)$, and let $P$ be the matrix representing the monodromy $\varphi_*\colon H_1(F;\Z)\to H_1(F;\Z)$. Then the Blanchfield pairing of $(M,\phi)$ is isomorphic to
\[ \ba{rcl} \L^k/(tP-\id)\times \L^k/(tP-\id)&\to &Q/\Lambda\\
(v,w)&\mapsto & v^T J(t^{-1}P-\id)^{-1}\ol{w}.\ea\]
\end{corollary}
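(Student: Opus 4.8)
The plan is to apply Theorem~\ref{thm:computebl} directly to the mapping torus $M = M(F,\varphi)$, using the fibre $F$ itself as the dual surface to $\phi$. With this choice the complement $Y = M \setminus (F \times (-1,1))$ deformation retracts onto a single copy of $F$ (the mapping torus cut along the fibre is just $F \times [0,1]$), so $H_1(Y;\Z) \cong H_1(F;\Z)$. Under this identification I expect the two inclusion-induced maps $\iota_\pm \colon H_1(F;\Z) \to H_1(Y;\Z)$ to become, respectively, the monodromy $\varphi_* = P$ and the identity (or vice versa), depending on how the gluing $(x,0)\sim(\varphi(x),1)$ interacts with the convention from Section~\ref{section:31} that $F_0 \times 1$ lies in $Y_0$ and $F_0 \times -1$ lies in $t^{-1}Y_0$. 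Getting this matching exactly right — in particular, which of $\iota_+,\iota_-$ is $P$ and which is $\id$, and tracking the resulting power of $t$ — is the step I expect to be the main obstacle, since a sign or exponent error there propagates into the final formula.

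Granting that identification, say $\iota_+ = P$ and $\iota_- = \id$ as maps $H_1(F;\Z)\to H_1(Y;\Z)\cong H_1(F;\Z)$, the Mayer--Vietoris sequence from Section~\ref{section:31} immediately gives the presentation
\[ H_1(M;\L) \cong \coker\big(\iota_+ - t^{-1}\iota_- \colon \L\otimes H_1(F) \to \L\otimes H_1(F)\big) = \L^k/(P - t^{-1}\id)\L^k, \]
which equals $\L^k/(tP - \id)\L^k$ since $P - t^{-1}\id$ and $tP - \id$ differ by the unit $t^{-1}$. The generator $\iota\colon H_1(F;\Z)\to H_1(M;\L)$ of Theorem~\ref{thm:computebl} corresponds under this to the identity on $\L^k$ composed with the quotient map, so the image of $\iota$ is all of $H_1(M;\L)$ and the theorem computes the pairing on everything. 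Note also that here, unlike in the knot case, there is no need for an analogue of Lemma~\ref{lemma:iota-+-surjective}: the honest surjectivity of $\iota$ comes for free because $H_1(Y)\cong H_1(F)$ with $\iota_+$ already an isomorphism.

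It then remains to substitute into the formula of Theorem~\ref{thm:computebl}. For $v,w \in \L^k \cong \L\otimes H_1(F)$ we get
\[ \Bl(\iota v, \iota w) = -\big((\iota_+ - t^{-1}\iota_-)^{-1}\iota_+ v\big) \underset{F}{\cdot} w = -\big((P - t^{-1}\id)^{-1} P v\big)^T J \,\ol{w}, \]
using that the intersection pairing on $H_1(F;\Z)$ is represented by $J$ in the chosen basis (hence $a\cdot_F b = a^T J \ol b$ after tensoring up, as in Section~\ref{section:31}). Now $-(P - t^{-1}\id)^{-1}P = (t^{-1}\id - P)^{-1}P$, and a short manipulation — writing $P = (P - t^{-1}\id) + t^{-1}\id$ and discarding the integral part, exactly as in the knot computation in Section~\ref{proof-of-mainthm} — rewrites the answer in the stated shape $v^T J (t^{-1}P - \id)^{-1}\ol{w}$; I would carry out this algebra carefully but it is routine, and the relation $P^T J P = J$ (equivalently $J^{-1}P^T J = P^{-1}$, reflecting that $\varphi$ preserves the intersection form) is what guarantees the resulting pairing is well-defined and sesquilinear on $\L^k/(tP-\id)$, as the corollary's parenthetical remark promises. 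Finally, if the bookkeeping in the first paragraph instead yields $\iota_+ = \id$, $\iota_- = P$, the same steps go through with $P$ and $\id$ interchanged and one checks it lands on the same formula after using $P^TJP = J$; I would include whichever orientation convention makes the signs work out and remark that the other is symmetric.
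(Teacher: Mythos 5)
Your proposal follows essentially the same route as the paper's proof: apply Theorem~\ref{thm:computebl} with the fibre as the dual surface, read off $H_1(M;\L)\cong \L^k/(tP-\id)$ from the Mayer--Vietoris sequence, and reduce $-\big((\iota_+-t^{-1}\iota_-)^{-1}\iota_+ v\big)\underset{F}{\cdot}w$ to the stated formula by matrix algebra using $P^TJP=J$. The one point you leave open --- which of $\iota_{\pm}$ is $P$ and which is $\id$ --- the paper settles by choosing the basis $d_i=\varphi^{-1}(c_i)\times \tmfrac{1}{2}$ of $H_1(Y;\Z)$, with which $\iota_+=P$ and $\iota_-=\id$; granting that, the rest of your computation matches.
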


\begin{proof}
In the following we identify $F$ with $F\times 0$ and we view $F\times [0,\frac{1}{4})\cup (\frac{3}{4},1]$ as the tubular neighbourhood of $F$.
Correspondingly we write $Y=F\times [\frac{1}{4},\frac{3}{4}]\subset M$. We equip $H_1(Y;\Z)$ with the basis given by $d_i=\varphi^{-1}(c_i)\times \frac{1}{2}$, $i=1,\dots,k$. We use these bases to identify $H_1(F;\Z)$ with $\Z^k$ and to identify $H_1(Y;\Z)$ with $\Z^k$.

With respect to these bases, the map $$\iota_+\colon H_1(F;\Z)=H_1(F\times 0;\Z)\to H_1(F\times\tmfrac{1}{4};\Z)\to  H_1(Y;\Z)$$ is represented by $P$  and the map $$\iota_-\colon  H_1(F;\Z)=H_1(F\times 0;\Z)\to H_1(F\times 1;\Z) \to H_1(F\times\tmfrac{3}{4};\Z)\to   H_1(Y;\Z)$$ is represented by $\id$.
Here the map $H_1(F\times 0;\Z)\to H_1(F\times 1;\Z)$ corresponds to the identification of $F\times 0=F\times 1$ via the gluing.

Now consider the following commutative diagram of exact sequences given by the Mayer-Vietoris sequence
\[
\xymatrix@C0.7cm@R0.8cm{  \Lambda \otimes H_1(F;\Z)\ar[d]^=\ar[rr]^{t\iota_+-\iota_-}
&&\Lambda \otimes H_1(Y;\Z)\ar[d]^=\ar[r] &H_1(M;\Lambda)\ar[r]\ar[d]^=&0\\
\Lambda^k\ar[rr]^{tP-\id}&&\Lambda^k\ar[r]\ar[ur]^-{=:\Phi}&H_1(M;\Lambda)\ar[r]&0.}\]
The map $\Phi\colon \Lambda^k\to H_1(M;\Lambda)$ descends to an isomorphism
 $\Lambda^k/(tP-\id)\to H_1(M;\Lambda)$. We claim that this isomorphism is the desired isometry. This follows immediately from the following calculation, which builds on Theorem~\ref{thm:computebl-intro}:
\[
\ba{rcl}
\Bl(d_i,d_j)= \Bl(\iota_-(c_i),\iota_-(c_j))&=&-(\iota_+-t^{-1}\iota_-)^{-1}(\iota_+(c_i))\underset{F}{\cdot} c_j\\
&=&\mbox{$ij$-entry of }-\big((P-t^{-1}\id)^{-1}P\big)^T J\\
&=&\mbox{$ij$-entry of }-P^T(P^T-t^{-1}\id)^{-1}J\\
&=&\mbox{$ij$-entry of }(-\id+t^{-1}(P^T)^{-1})^{-1}J\\
&=&\mbox{$ij$-entry of }J(t^{-1}P-\id).\ea
\]
Here in the last equality we used that $J=P^TJP$, i.e.\ that $(P^T)^{-1}J=JP$.
\end{proof}

\end{document}